\begin{document}

\newcommand\E{\mathbb{E}}
\newcommand\var{\mathop{\text{Var}}}
\newcommand\cov{\mathop{\text{Cov}}}
\newcommand\tr{\mathop{\text{tr}}}
\newcommand{\Sum}[2]{\sum_{\scriptstyle #1 \atop \scriptstyle #2}}
\newcommand{\rom}[1]{\uppercase\expandafter{\romannumeral #1}}
\newcommand{\g}[2]{g_{(#1)}(#2)}
\newcommand{\f}[2]{f_{(#1)}(#2)}
\newcommand{\summ}[2]{\mathop{\sum}_{#1}^{#2}}
\newcommand\re[1]{{\color{black}#1}}
\newcommand\bl[1]{{\color{blue}#1}}

  \title{Moment approach for singular values distribution of a large auto-covariance matrix}

\titlerunning{Singular values distribution of a large auto-covariance matrix}

\author{Wang Qinwen         \and
        Yao Jianfeng
}

\institute{Wang Qinwen \at
              Department of Mathematics\\
              Zhejiang University \\
              email:wqw8813@gmail.com           
           \and
           Yao Jianfeng \at
              Department of Statistics and Actuarial Science\\
      The University of Hong Kong\\
      Pokfulam,
      Hong Kong \\
      email:jeffyao@hku.hk           
}
\date{Received: date / Accepted: date}

\maketitle

  \begin{abstract}
 Let $(\varepsilon_{t})_{t>0}$ be a sequence of independent   real random vectors of $p$-dimension   and let $X_T= \sum_{t=s+1}^{s+T}\varepsilon_t\varepsilon^T_{t-s}/T$ be the lag-$s$ ($s$ is a fixed positive integer) auto-covariance matrix of $\varepsilon_t$. Since $X_T$ is not symmetric, we consider its singular values, which are the square roots of the eigenvalues of $X_TX^T_T$. Therefore, the purpose of this paper  is to  investigate the limiting behaviors of the eigenvalues of $X_TX^T_T$ in two aspects. First, we show that the empirical spectral distribution of its eigenvalues converges  to a nonrandom limit $F$. Second, we establish the convergence of its largest eigenvalue to the right edge of $F$. Both  results are derived using moment methods.
\keywords{Auto-covariance matrix \and Singular values  \and Limiting spectral distribution \and Stieltjes transform \and Largest eigenvalue \and Moment method}
 \subclass{ 15A52 \and  60F15}

  \end{abstract}

\section{Introduction}
Let $(\varepsilon_{t})_{t>0}$ be a sequence of independent   real random vectors of $p$-dimension   and let $X_T= \sum_{t=s+1}^{s+T}\varepsilon_t\varepsilon^T_{t-s}/T$ be the lag-$s$ ($s$ is a fixed positive integer) auto-covariance matrix of $\varepsilon_t$.
The motivation of the above set up  is due to the study of dynamic factor model,  see \cite{LY012}. Set
\begin{align}
x_t=\Lambda f_t+\varepsilon_t+\mu~,
\end{align}
where $x_t$ is a  $p$-dimensional sequence observed at time $t$, $\{f_t\}$ a sequence of $m$-dimensional ``latent factor'' ($m\ll p$) uncorrelated with the error process $\{\varepsilon_t\}$ and $\mu \in \mathbb{R}^p$ is the general mean. Therefore, the lag-$s$ auto-covariance matrix of the time series $x_t$ can be considered as a finite rank (rank $m$) perturbation of  the lag-$s$ auto-covariance matrix of $\varepsilon_t$. Therefore, the first step is to study the base component, which is the  lag-$s$ auto-covariance matrix of the error term.
Besides, we are considering the random matrix framework, where the dimension $p$ and the sample size $T$ both tend to infinity with their ratio converging to a constant: $\lim p/T \rightarrow y>0$.

%
One of the main problems in random matrix theory is to investigate the convergence of the sequence of {\em empirical spectral distribution} $\{F^{A_n}\}$ for  a given sequence of symmetric or Hermitian random matrices $\{A_n\}$, where
\begin{align*}
F^{A_n}(x):=\frac 1p \sum_{j=1}^p\delta_{l_j}~,
\end{align*}
$l_j$ are the eigenvalues of $A_n$.
The limit distribution $F$, which is usually nonrandom, is called the {\em limiting spectral distribution} (LSD) of the sequence $\{A_n\}$. The study of spectral analysis of large dimensional random matrices dates back to the Wigner's famous semicircular law (\cite{wigner}) for Wigner matrix, which is further extended in various aspects: Mar\v{c}enko-Pastur (M-P) law (\cite{MP}) for large dimensional sample covariance matrix; and circular law for complex random matrix (\cite{girko}).
Another aspect is the bound on extreme eigenvalues. The literature dates back to   \cite{geman}, who proved the almost sure convergence of the largest eigenvalue of a sample covariance matrix under however some moment restrictions, which is later improved by \cite{yin}. For Wigner matrix, \cite{baiyin} found the sufficient and necessary condition for the almost sure convergence of its largest eigenvalue.  \cite{vu} presented an upper bound for the spectral norm of symmetric random matrices with independent entries and \cite{peche} derived the lower bound. \cite{Vershynin} studied the sharp upper bound of the spectral norm of products of random and deterministic matrices, which behave similarly to random matrices with independent entries, etc.

Notice that  lag-$0$ auto-covariance  matrix of $\varepsilon_t$ reduces to the standard sample covariance matrix $\frac 1T \sum_{t=1}^{T}\varepsilon_t\varepsilon^T_{t}$ and its property in large-dimension has been well developed in the literature. In contrast, very few is known for the lag-$s$ auto-covariance matrix $X_T$. Recent related work include \cite{liu}, \cite{li} and \cite{jin} for the LSD of the symmetrized auto-covariance matrix and \cite{wang} for its exact separation, which also ensures the convergence of its largest eigenvalue.

Since $X_T$ is not symmetric, its singular values are the square roots of the $p$ nonnegative eigenvalues of
\begin{align}\label{at}
A_T:=X_TX_T^T~.
\end{align}
Therefore, the main purpose of this paper is on the limiting behaviors of the eigenvalues of $A_T$.
First, we derive exact moment formula for its LSD. Next, this moment sequence is shown to satisfy the Carleman condition so that it determines uniquely the LSD. Then using power expansion, the Stieltjes transform of the LSD
\[
s(z):=\int \frac{1}{x-z}dF(x)
\]
can be found to
satisfy the following equation:
\begin{align*}
y^2z^2s^3(z)+y^2zs^2(z)-yzs^2(z)-zs(z)-1=0~.
\end{align*}
This result is  derived in \cite{li}  using a totally different  Stieltjes transform method.
Second, we show the  convergence of the largest eigenvalue of $A_T$  to the right end point  of the support of $F$ almost surely (trivially, this implies the convergence of the largest singular value of $X_T$).
Both these two results are derived using moment method. A distinctive feature here is that the matrix $A_T$ can be considered as the product of four matrices involving $\varepsilon_t$, new methodology is needed with respect to the existing literature on random matrix theory. In particular, we provide in Section \ref{de} some recursion formula related to the number of random walks, which further leads to our moment result, and the method may be of independent interest.

The rest of the paper is organized as follows. Preliminary introduction on the related graph theory is provided in Section \ref{graph}. Section \ref{lsd} derives the exact moment formula for the limiting spectral distribution of $A_T$, which further leads to the expression of its corresponding Stieltjes transform. Section \ref{largest} gives details of the convergence of the largest eigenvalue of $A_T$. In Section \ref{de}, we provide some techniques related to random walks for deriving some recursion formula, which further leads to the limiting moments in Section \ref{lsd}.




\section{Some graph theory}\label{graph}
In order to calculate the moments of the LSD, we need some information from graph theory.

For a pair of vectors of indexes $i=(i_1,\cdots,i_{2k})$ $(1\leq i_l \leq T, ~l\leq 2k)$ and $j=(j_1,\cdots,j_{2k})$ $(1\leq j_l \leq p, ~l\leq 2k)$, construct a graph $Q(i,j)$ in the following way. Draw two parallel lines, referred to as the I-line and J-line. Plot $i_1,\cdots,i_{2k}$ on the I-line and $j_1,\cdots,j_{2k}$ on the J-line, called the I-vertices and J-vertices, respectively. Draw $k$ down edges from $i_{2u-1}$ to $j_{2u-1}$, $k$ down edges from $i_{2u}+s$ to $j_{2u}$, $k$ up edges from $j_{2u-1}$ to $i_{2u}$, $k$ up edges from $j_{2u}$ to $i_{2u+1}+s$ (all these up and down edges are called vertical edges) and $k$ horizontal edges from $i_{2u}$ to $i_{2u}+s$, $k$ horizontal edges from $i_{2u-1}+s$ to $i_{2u-1}$ (with the convention that $i_{2k+1}=i_1$), where all the $u$'s are in the region: $1\leq u\leq k$. An example of a $Q$ graph with $k=3$ is shown in Figure \ref{sixq}.

\begin{figure}[h!]
\centering
\includegraphics[width=13cm]{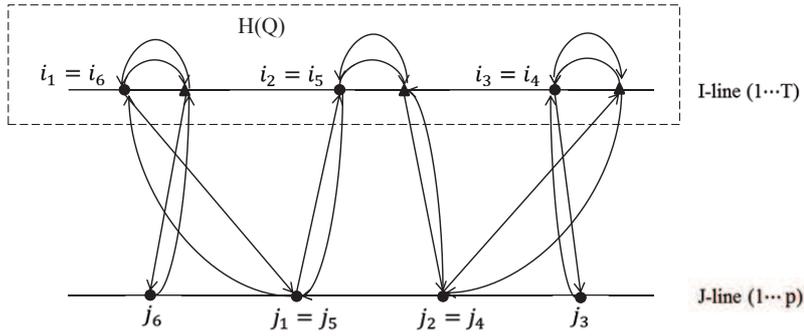}
\caption{\small An example of a $Q$ graph with $k=3$.}\label{sixq}
\end{figure}

\begin{definition}
The subgraph of all I-vertices is called the \textbf{roof} of $Q$ and is denoted by $H(Q)$ (see also Figure \ref{sixq} for illustration of $H(Q)$).
\end{definition}
\begin{definition}
The M-\textbf{minor or pillar} of $Q$ is defined as the \textbf{minor} of $Q$ by contracting all horizontal edges, which means that all horizontal edges are removed from Q and all I-vertices connected through horizontal edges are glued together. We denote the  M-\textbf{minor or pillar} of $Q$ by $M(Q)$ (see Figure \ref{sixm}).
\end{definition}

\begin{figure}[h!]
\centering
\includegraphics[width=11cm]{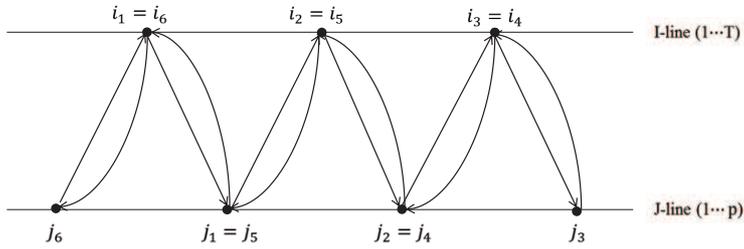}
\caption{\small The M-minor or pillar $M(Q)$ that corresponds to  the graph $Q$ in Figure \ref{sixq}.}\label{sixm}
\end{figure}

\begin{definition}
For a given $M(Q)$, glue all coincident vertical edges; namely, we regard all vertical edges with a common I-vertex and J-vertex as one edge. Then we get an undirectional connected graph. We call the resulting graph the \textbf{base} of the graph Q, and denote it by B(Q) (see Figure \ref{sixb}).
\end{definition}

\begin{figure}[h!]
\centering
\includegraphics[width=11cm]{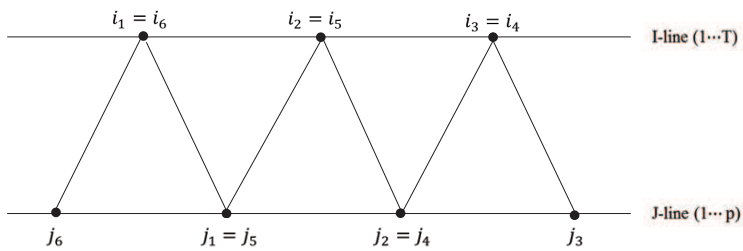}
\caption{\small The base $B(Q)$ that corresponds to  the graph $Q$ in Figure \ref{sixq}.}\label{sixb}
\end{figure}

\begin{definition}
For a vertical edge $e$ of $B(Q)$, the number of up (down) vertical edges of $Q$ coincident with $e$ is called the \textbf{up (down) multiplicity} of $e$.
\end{definition}

\begin{definition}
The \textbf{degree} of a vertex $i_l$  is the number of edges incident to this vertex.
\end{definition}

\begin{definition}\label{inn}
An up (down) \textbf {innovation} is an up (down) vertical edge which leads to a   vertex that  has not been  visited before.
\end{definition}

\begin{definition}
Two graphs are said to be \textbf{isomorphic} if one becomes the other by a suitable permutation on $(1,\cdots, T)$ and a suitable permutation on $(1,\cdots, p)$.
\end{definition}

\begin{definition}
Define a \textbf{characteristic sequence} as $(d_1~u_1~\cdots~d_{2k}~u_{2k})$, where  $\{u_1,\cdots,u_{2k}\}$ and $\{d_1,\cdots,d_{2k}\}$ are associated with the $2k$ up edges and $2k$ down edges of a pillar $M(Q)$ according to the following rule:
\begin{align*}
u_l=\left\{\begin{array}{cl}
1,& \text{the $l$-th up edge is an up innovation}~,\\
0,& \text{otherwise}~,
\end{array}\right.
\end{align*}
and
\begin{align*}
d_l=\left\{\begin{array}{cl}
0,& \text{the $l$-th down edge is an down innovation}~,\\
-1,& \text{otherwise}~.
\end{array}\right.
\end{align*}
\end{definition}
An example of the characteristic sequence associated with the pillar in Figure \ref{sixm} is given in the following Figure \ref{cha} with
\begin{align*}
&\{u_1,u_2,u_3,u_4,u_5,u_6\}=\{\text{1,~1,~0,~0,~0,~0}\}~,\\
&\{d_1,d_2,d_3,d_4,d_5,d_6\}=\{\text{0,~0,~0,~-1,~-1,~0}\}~;
\end{align*}
that is, the corresponding characteristic sequence is (0~1~0~1~0~0~-1~0~-1~0~0~0). Conversely, it can be verified that any  characteristic sequence $(d_1~u_1~\cdots~ d_{2k}~u_{2k})$ uniquely defines a pillar $M(Q)$.

\begin{figure}[h!]
\centering
\includegraphics[width=12cm]{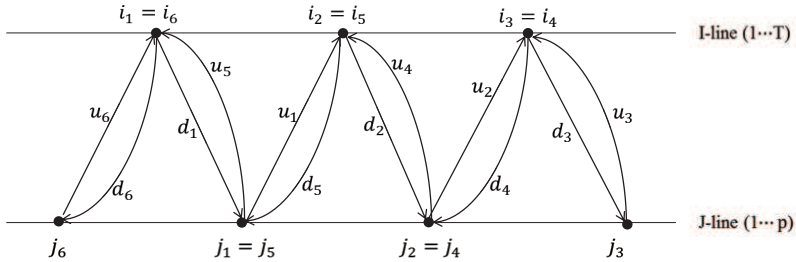}
\caption{\small Characteristic  sequence associated with the pillar $M(Q)$ in Figure \ref{sixm}.}\label{cha}
\end{figure}

\section{LSD of $A_T$ using moment method}\label{lsd}
The problem of showing  the convergence of the ESD of $A_T$ reduces to showing that the sequence of its moments $m_k(A_T):=\tr A_T^k/p$, $k\geq 1$, tends to a limit $(m_k)_k$, and this limit determines properly a probability distribution. For example, the  later property is guaranteed if the moment sequence $(m_k)_k$ satisfies the Carleman  condition:
\begin{align}\label{cal}
\sum_{k=1}^{\infty}m_{2k}^{-1/2k}=\infty~.
\end{align}

The following theorem gives the exact formula for the limiting moments $(m_k)_k$.

\begin{theorem}\label{th1}
Suppose the following conditions hold:

\noindent(a).  $(\varepsilon_{t})_t$ is a sequence of independent $p$-dimensional real valued random vectors with independent entries $\varepsilon_{it}$, $i=1,\cdots, p$ satisfying
\[
\E (\varepsilon_{ij})=0,~\E (\varepsilon_{ij}^2)=1~
\]
 and for any $\eta>0$,
\[
\frac{1}{\eta^4 pT}\sum_{i,j}\E \left(|\varepsilon_{ij}|^4I_{(|\varepsilon_{ij}|\geq \eta T^{1/4})}\right)\longrightarrow 0~\quad \text{as}~ p, T \rightarrow \infty;
\]

\noindent (b). $p$ and $T$ tend to infinity proportionally, that is,
\[
p \rightarrow \infty, ~T \rightarrow \infty,~ y_T:=p/T \rightarrow y \in (0,\infty)~.
\]

\noindent Then, with probability one, the empirical spectral distribution $F^{A_T}$ of the matrix $A_T$ in \eqref{at} tends to a limiting distribution $F$ whose $k$-th moment is given by:
\[
m_k=\sum_{i=0}^{k-1}\frac 1k \begin{pmatrix}
                               2k \\
                               i \\
                             \end{pmatrix}\begin{pmatrix}
                                            k \\
                                            i+1 \\
                                          \end{pmatrix}y^{2k-1-i}~.
\]
\end{theorem}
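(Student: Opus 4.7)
The natural approach is the moment method, carried out in four stages: truncation, expansion of $\E[m_k(A_T)]$ as a sum over the $Q$-graphs of Section \ref{graph}, identification and enumeration of the leading canonical graphs via the random-walk recursion of Section \ref{de}, and a fluctuation bound plus Carleman check to finish.

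First, using condition (a), one truncates the entries at level $\eta T^{1/4}$ without affecting the limit. Expanding $\tr A_T^k$ produces a sum over time indices $i=(i_1,\ldots,i_{2k})$ and space indices $j=(j_1,\ldots,j_{2k})$, each term weighted by $T^{-2k}$ times a product of $4k$ entries of $\varepsilon$ matching the $Q(i,j)$ edge pattern. The mean-zero and independence properties force each vertical edge of the base $B(Q)$ to be traversed at least twice; otherwise the term vanishes in expectation. Grouping by the isomorphism class of $B(Q)$ and counting admissible index assignments shows that the contribution of a given class is $O(T^{\,|V_I(B(Q))|+|V_J(B(Q))|-2k-1})$, after absorbing the ratio $p/T\to y$ into suitable powers. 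This is $O(1)$ exactly when $B(Q)$ is a tree and every vertical edge has multiplicity two, i.e., a canonical graph; all other configurations contribute $o(1)$.

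The heart of the proof is enumerating canonical graphs. Each such $Q$ is determined by its pillar $M(Q)$, which is itself encoded by a characteristic sequence $(d_1u_1\cdots d_{2k}u_{2k})$. Classify these by the parameter $i\in\{0,\ldots,k-1\}$ equal to the number of non-innovation down edges; for fixed $i$, the number of admissible characteristic sequences is $\frac{1}{k}\binom{2k}{i}\binom{k}{i+1}$ by the random-walk recursion of Section \ref{de}, while the accompanying factor is $y^{2k-1-i}$, coming from the $1/p$ and $1/T^{2k}$ normalizations together with the counts of distinct I- and J-vertices of the canonical base. Summing over $i$ then yields the claimed formula for $m_k$. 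I expect this combinatorial step to be the main obstacle: the lag $s>0$ enters through horizontal edges $(i_l,i_l+s)$ that couple the I-line in a non-trivial way, and one must verify both that the canonical count is $s$-independent (so the limit is shift-invariant) and that the innovation conditions on up/down edges translate exactly into the admissibility constraints of the underlying random walk.

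Finally, the analogous graph expansion applied to $\E[(m_k(A_T)-\E m_k(A_T))^4]$ yields an $O(T^{-2})$ bound, since non-canonical pairings across four $Q$-graphs each carry extra factors of $T^{-1}$; Borel--Cantelli then upgrades the convergence of $\E[m_k(A_T)]$ to almost-sure convergence of $m_k(A_T)$ for every $k$. A crude estimate from the explicit formula, say $m_k\le (8\max(1,y))^{2k}$, shows that $m_{2k}^{-1/(2k)}$ is bounded below by a positive constant, so the Carleman condition \eqref{cal} holds and $(m_k)$ uniquely determines $F$; standard moment-convergence results then give $F^{A_T}\to F$ almost surely.
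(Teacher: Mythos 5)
Your proposal follows essentially the same strategy as the paper: truncation, expansion of $\E\,m_k(A_T)$ over the $Q$-graphs, identification of the canonical graphs (tree base, all vertical edges of multiplicity two) as the sole $O(1)$ contributors with each isomorphism class contributing $O(T^{t+s-1-2k})$, enumeration of canonical pillars by characteristic sequences and the random-walk recursion of Section~\ref{de} yielding $f_{t-1}(k)=\tfrac1k\binom{2k}{t-1}\binom{k}{t}$, and a concentration bound plus the Carleman condition to finish; your indexing by non-innovation down edges $i=t-1$ with factor $y^{2k-1-i}$ matches the paper's $y^{2k-t}f_{t-1}(k)$. The only minor deviations are cosmetic: you propose a fourth-moment bound where the paper computes $\var(m_k(A_T))=O(p^{-2})$ and applies Chebyshev (both give summable tails), and you use a crude Stirling-type estimate for Carleman where the paper imports the sharp bound $m_k\le b^k$ from the compact support in \cite{li} --- though the paper itself remarks the crude bound would have sufficed there.
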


\begin{remark}
Using the expression of the limiting moment above, we are able to derive that the Stieltjes transform of
$F$:
\[
s(z):=\int \frac{1}{x-z}dF(x)
\]
satisfies the following equation:
\begin{align}\label{se}
y^2z^2s^3(z)+y^2zs^2(z)-yzs^2(z)-zs(z)-1=0~,
\end{align}
which coincides with an earlier result in \cite{li} found by using the Stieltjes transform method.

Indeed, by the series expansion of the function $\frac{1}{1-x}$,  the Stieltjes transform of a LSD can be expanded using its  moments:
\begin{align*}
s(z)&=\int \frac{1}{x-z}dF(x)=-\frac 1z-\sum_{i=1}^{\infty}\frac{1}{z^{i+1}}\cdot m_i\\
&=-\frac 1z-\frac 1z\cdot\sum_{i=1}^{\infty}\frac{m_i}{z^{i}}~.
\end{align*}
Let $h(z)$ be the moment generating function of $m_i$:
\[
h(z)=\sum_{i=0}^{\infty} m_iz^i~,
\]
then the part $\sum_{i=1}^{\infty}m_i/z^{i}$ equals to $h(1/z)-1$. Therefore, we have the relationship between the Stieltjes transform $s(z)$ and the moment generating function $h(z)$:
\begin{align}\label{sh}
s(z)=-\frac 1zh\left(\frac 1z\right)~.
\end{align}
In the proof of Theorem \ref{th1}, we will see that $h$ satisfies
 the equation:
\begin{align*}
xy^2h^3(x)+x(y-y^2)h^2(x)-h(x)+1=0~,
\end{align*}
which is detailed in Appendix, see \eqref{rh}.
Let $x=1/z$ in it and combine with \eqref{sh} leads to \eqref{se}.

\end{remark}

\begin{proof}(of Theorem \ref{th1})
After truncation and centralization, see Appendix A in \cite{li}, we may assume in all the following that
\begin{align*}
|\varepsilon_{ij}|\leq \delta p^{1/4}:=M, ~\E(\varepsilon_{ij})=0, ~\var(\varepsilon_{ij})=1~.
\end{align*}
With a little bit calculation, we have
\begin{align*}
m_k(A_T)&= \frac 1p\sum_{{\bf i}=1}^T\sum_{{\bf j}=1}^p\frac{1}{T^{2k}}[\varepsilon_{j_1\,i_1}\varepsilon_{j_1\,i_2}\varepsilon_{j_2\,s+i_2}\varepsilon_{j_2\,s+i_3}\varepsilon_{j_3\,i_3}\varepsilon_{j_3\,i_4}\varepsilon_{j_4\,s+i_4}\varepsilon_{j_4\,s+i_5}\nonumber\\
&\quad\quad\quad\quad\quad\quad\cdots \varepsilon_{j_{2k-1}\,i_{2k-1}}\varepsilon_{j_{2k-1}\,i_{2k}}\varepsilon_{j_{2k}\,s+i_{2k}}\varepsilon_{j_{2k}\,s+i_1}]\nonumber\\
&=\frac{1}{pT^{2k}}\sum_{\bf i,j}E_{Q({\bf i,j})}~,
\end{align*}
where the summation runs over all $Q(i,j)$-graph of length $4k$ as defined in Section \ref{graph}, the indices in ${\bf i}=(i_1, \cdots, i_{2k})$ run over $1,2,\cdots,T$ and the indices in ${\bf j}=(j_1, \cdots, j_{2k})$ run over $1,2,\cdots,p$. See the following Figure \ref{exp} for an illustration.

\begin{figure}[h!]
\centering
\includegraphics[width=13cm]{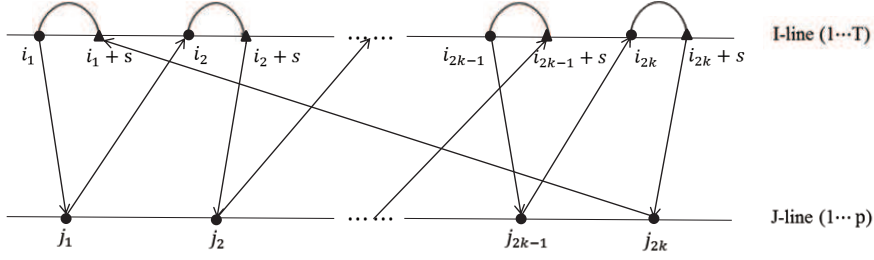}
\caption{\small The $Q(i,j)$-graph that corresponds to \eqref{etrak}.}\label{exp}
\end{figure}

Now suppose the pillar of the $Q$-graph in Figure \ref{exp} has $t$ noncoincident $I$-vertices and $s$ noncoincident $J$-vertices, which results in $s$ down innovation and $t-1$ up innovation (we make the convention that the first down edge is always a down innovation and the last up edge is not an innovation).
Then  in the corresponding characteristic sequence $(d_1~u_1~\cdots~ d_{2k}~u_{2k})$, the number of ``1'' (``1'' only appears in the even position as it corresponds to the up edge) is $t-1$, the number of ``-1'' (``-1'' only appears in the odd position) is $2k-s$ and the sequence starts and ends with  ``0''.

We classify the $Q$-graphs  in Figure \ref{exp} into three categories:

Category 1 (denoted by $Q_1$) contains all the $Q$-graphs that in its pillar $M(Q)$,  each down edge  must coincide with one and only one up edge and its base $B(Q)$ is a tree of $2k$ edges. In this category, $t+s-1=2k$ and thus $s$ is suppressed for simplicity.

Category 2 (denoted by $Q_2$) contains all the $Q$-graphs that have at least one single vertical edge.

Category 3 (denoted by $Q_3$) contains all other $Q$-graphs.

The almost sure convergence of the ESD of $A_T$ will result from the following two assertions:
\begin{align}
\E (m_k(A_T))&=\frac 1p\sum_{{\bf i}=1}^T\sum_{{\bf j}=1}^p\frac{1}{T^{2k}}\E[\varepsilon_{j_1\,i_1}\varepsilon_{j_1\,i_2}\varepsilon_{j_2\,s+i_2}\varepsilon_{j_2\,s+i_3}\varepsilon_{j_3\,i_3}\varepsilon_{j_3\,i_4}\varepsilon_{j_4\,s+i_4}\varepsilon_{j_4\,s+i_5}\nonumber\\
&\quad\quad\quad\quad\quad\quad\cdots \varepsilon_{j_{2k-1}\,i_{2k-1}}\varepsilon_{j_{2k-1}\,i_{2k}}\varepsilon_{j_{2k}\,s+i_{2k}}\varepsilon_{j_{2k}\,s+i_1}]\label{etrak}\\
&=\frac{1}{pT^{2k}}\sum_{{\bf i},{\bf j}}\E\left(E_{Q({\bf i},{\bf j})}\right)\nonumber\\
&=\sum_{i=0}^{k-1}\frac 1k \begin{pmatrix}
                               2k \\
                               i \\
                             \end{pmatrix}\begin{pmatrix}
                                            k \\
                                            i+1 \\
                                          \end{pmatrix}y_T^{2k-1-i}+o(1)~,\label{e}
\end{align}
and
\begin{align}\label{va}
&\quad~\var (m_k(A_T))\nonumber\\
&=\frac{1}{p^2T^{4k}}\sum_{{\bf i_1},{\bf j_1},{\bf i_2},{\bf j_2}}\left[\E\left(E_{Q_1({\bf i_1},{\bf j_1})}E_{Q_2({\bf i_2},{\bf j_2})}\right)-\E\left(E_{Q_1({\bf  i_1},{\bf  j_1})}\right)\E\left(E_{Q_2({\bf  i_2},{\bf  j_2})}\right)\right]\nonumber\\
&=O(p^{-2})~.
\end{align}

\noindent {\bf The proof of \eqref{e}.}
Since $\E \varepsilon_{ij}=0$, the only non-vanishing terms in \eqref{etrak} are those for which each edge in the $Q$-graph occurs at least twice. So the contribution of Category 2 is zero.

Next, we only consider those $Q$-graphs that fall in Category 1 and 3.
Denote $b_l$ the degree associated to the $I$-vertex $i_l$ $(1\leq l\leq t)$ in its corresponding $M$-pillar, then we have $b_1+\cdots+b_t=4k$, which is the total number of edges. On the other hand, since we glue the $I$-vertex $i_l$ and $i_l+s$ in the definition of $M$-pillar, each degree $b_l$ should be no less than $4$; otherwise, there will be some single vertical edges in the $Q$-graph, which results in Category 2. Therefore, we have
\begin{align*}
4k=b_1+\cdots+b_t\geq 4t~,
\end{align*}
which is $t\leq k$.

\medskip
\noindent\underline{Category 1:}
In Category 1, since in the $Q$-graph, each down edge  must coincide with one and only one up edge, the total number of non-coincident edges is $2k$.  Besides, due to the restriction that the base $B(Q)$ is a tree, we   have $s+t-1=2k$ (according to the definition of a tree that $\#\{\text{edges}\}=\#\{\text{vertices}\}-1$), and this means that the summation over the elements in the corresponding characteristic sequence is zero (it is because we have the number of ``1'' equals $t-1$ and the number of ``-1'' equals $2k-s$).

The characteristic sequence of a $M$-pillar whose corresponding $Q$-graph lies in Category 1 has the following features (see Remark \ref{re1} and \ref{re2}):
\begin{enumerate}[{(1)}]
\item The total length of the characteristic sequence is $4k$;
  \item The sequence starts with a zero and ends with a zero (the first down edge is a down innovation and the last up edge is not an innovation);
  \item The number ``1'' appears only in the even position in the sequence and the number ``-1'' only in the odd position (down edges are in the odd position while up edges are in the even);
  \item The total number of ``1'' in  the characteristic sequence is $t-1$ ($t-1$ up innovation), which equals the total number of ``-1'';
  \item The sequences are made with  the following subsequence structure:
\begin{align}\label{sub}
& 1\underbrace{00}_{\text{two}}-1\nonumber\\
&1\underbrace{000000}_{\text{six}}-1\nonumber\\
&1\underbrace{0000000000}_{\text{ten}}-1\nonumber\\
&\quad\quad\cdots\nonumber\\
&1\underbrace{0000\cdots 00000}_{4k-6}-1~.
\end{align}
\end{enumerate}

Denote $f_{t-1}(k)$ as the number of $Q$-graphs whose $M$-pillar  satisfies the above condition (1)-(5) (here, we have two index: $t-1$, which is the number of up innovations and $k$, which is a quarter of the total length of the sequence), then we have the contribution of Category 1 to \eqref{etrak}:
\begin{align}\label{m1}
&\quad~\frac{1}{pT^{2k}}\cdot\sum_{t=1}^kT(T-1)\cdots (T-t+1)p(p-1)\cdots (p-s+1)f_{t-1}(k)\nonumber\\
&=\sum_{t=1}^k y_T^{2k-t}f_{t-1}(k)+O(\frac 1p)~.
\end{align}

\noindent\underline{Category 3:} Category 3 consists two situations, see the following lemma.

\begin{lemma}[\cite{bai}]\label{lemmabai}
Denote the coincident multiplicities of the $l$-th noncoincident vertical edge by $a_l$, $l=1,2,\cdots, m$, where $m$ is the number of noncoincident vertical edges.
If $Q \in Q_3$, then (a) either there is a $a_l\geq 3$ with $t+s-1\leq m <2k$  or (b) all $a_l=2$ with $t+s-1<m=2k$.
\end{lemma}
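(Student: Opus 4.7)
The plan is to proceed by systematic counting from the definitions, with the key non-trivial input being a classical balance argument for closed walks on trees.

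First I would record the elementary edge counts. The graph $Q$ has $2k$ up edges and $2k$ down edges, so $\sum_{l=1}^m a_l = 4k$. Since $Q \notin Q_2$, no vertical edge of $B(Q)$ is single, which means $a_l \geq 2$ for every $l$. This yields an immediate dichotomy: either some $a_l \geq 3$, or all $a_l = 2$. In the first case, combining $a_{l_0} \geq 3$ with $a_l \geq 2$ for the others gives $4k = \sum a_l \geq 2m+1$, hence $m \leq 2k-1 < 2k$; this is the structure of case (a). In the second case $4k = 2m$, so $m = 2k$; this is the edge-count part of case (b).

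Next I would invoke the connectivity of $B(Q)$. By construction $B(Q)$ is an undirectional connected graph with $t+s$ vertices and $m$ edges, so the elementary inequality $m \geq t+s-1$ holds, with equality precisely when $B(Q)$ is a tree. This already completes case (a), giving $t+s-1 \leq m < 2k$.

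The main obstacle is case (b), where I must rule out $B(Q)$ being a tree, i.e., upgrade $m \geq t+s-1$ to $m > t+s-1$ when all $a_l = 2$. I would argue by contradiction. The sequence of $4k$ vertical edges of $Q$, read in the natural cyclic order alternating I-vertex and J-vertex, descends via $M(Q)$ to a closed walk on $B(Q)$, and each edge $e_l$ of $B(Q)$ is traversed exactly $a_l$ times in total. Suppose $B(Q)$ were a tree. Then every edge is a bridge: removing it splits $B(Q)$ into two components, one containing each endpoint. For a closed walk, each crossing of the bridge from the I-side to the J-side (a down traversal) must be matched by a return crossing (an up traversal). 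Consequently the up-multiplicity and down-multiplicity of each $e_l$ coincide, so $a_l$ is even and in fact $a_l = 2$ forces exactly one up and one down traversal per edge. But this is precisely the defining property of Category 1, contradicting $Q \in Q_3$. Hence $B(Q)$ is not a tree, so $t+s-1 < m = 2k$, completing case (b).

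The crux of the argument is the balance of up and down traversals at each bridge of a tree under a closed walk; everything else is bookkeeping from the definitions of $M(Q)$, $B(Q)$, and the categories $Q_1,Q_2,Q_3$.
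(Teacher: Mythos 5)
The paper does not prove this lemma; it is cited verbatim from Bai and Silverstein \cite{bai}, so there is no in-text proof to compare against. Your proof is correct and fills this gap cleanly. The edge-count dichotomy (some $a_l\geq 3$ versus all $a_l=2$), the bound $\sum a_l=4k$ with $a_l\geq 2$, and the connectivity inequality $m\geq t+s-1$ for $B(Q)$ are all immediate, and they dispose of case (a). The non-trivial step, as you correctly flag, is ruling out the tree case when all $a_l=2$: the walk that $M(Q)$ induces on $B(Q)$, namely $i_1\to j_1\to i_2\to\cdots\to j_{2k}\to i_1$, is a closed walk of length $4k$ alternating between I- and J-vertices, so if $B(Q)$ were a tree every edge would be a bridge and would be crossed equally often in each direction, forcing one up and one down traversal per edge and hence placing $Q$ in Category~1 --- a contradiction. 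This is precisely the argument one would expect from the cited reference, and your identification of the bridge-balance property as the crux is accurate.
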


First, we see the contribution of (a).  By the moment assumption that the moment $\E |\varepsilon_{ij}|^a$ is bounded by $M^{a-2}$ for $a\geq 2$ and $a_1+\cdots +a_m=4k$, we conclude that the expectation
\begin{align*}
\E[\varepsilon_{j_1\,i_1}\varepsilon_{j_1\,i_2}\varepsilon_{j_2\,s+i_2}\varepsilon_{j_2\,s+i_3}\varepsilon_{j_3\,i_3}\varepsilon_{j_3\,i_4}\varepsilon_{j_4\,s+i_4}\varepsilon_{j_4\,s+i_5}\cdots \varepsilon_{j_{2k}\,s+i_{2k}}\varepsilon_{j_{2k}\,s+i_1}]
\end{align*}
in \eqref{etrak} has magnitude at most $M^{4k-2m}$. Then we have \eqref{etrak}  bounded by
\begin{align}\label{b1}
&\quad \frac{1}{pT^{2k}}\cdot\sum_{t=1}^kT^tp^sM^{4k-2m}\#\{\text{isomorphism class in}~ Q_3\}\nonumber\\
&=O\left(\sum_{t=1}^kp^{t+s-k-m/2-1}\cdot \delta^{4k-2m}\right)~,
\end{align}
where the  equality is due to the fact that $p$ and $T$ are in the same order and also for fixed $k$, the part $\#\{\text{isomorphism class in}~ Q_3\}$ is of order $O(1)$. Then
\begin{align}\label{q1}
\sum_{t=1}^kp^{t+s-k-m/2-1}\leq \sum_{t=1}^kp^{m-k-m/2}\leq \sum_{t=1}^kp^{\frac{2k-1}{2}-k}=O(p^{-1/2})~,
\end{align}
which is due to the assumption that $t+s-1\leq m <2k$, so the contribution of \eqref{b1} is $o(p^{-1/2})$.

Next, we consider the contribution of (b). Since all $a_l=2$, we have the part of expectation equals 1.  Therefore, \eqref{etrak} is bounded by
\begin{align}\label{q2}
\frac{1}{pT^{2k}}\cdot\sum_{t=1}^kT^tp^s\#\{\text{isomorphism class in}~ Q_3\}=O\left(\sum_{t=1}^kp^{t+s-2k-1}\right)=O(1/p)~,
\end{align}
where the equation is due to the fact that $t+s\leq 2k$.

Therefore, combine \eqref{m1}, \eqref{q1} and \eqref{q2}, we finally have
\begin{align}\label{tr}
\E (m_k(A_T))=\sum_{t=1}^k y_T^{2k-t}f_{t-1}(k)+o(1)~.
\end{align}
To end the proof of \eqref{e}, we need to determine the value of $f_{t-1}(k)$. This involves complex combinatorics and analytic arguments and the details of the derivation is given in Section \ref{de}. Finally, using
 \eqref{fmkfor} derived in Remark \ref{fff} that
\begin{align*}
f_m(k)=\frac 1k\begin{pmatrix}
                 2k \\
                 m \\
               \end{pmatrix}\begin{pmatrix}
                              k \\
                              m+1 \\
                            \end{pmatrix}~,
\end{align*}
 we have
\[\E (m_k(A_T))=\sum_{i=0}^{k-1}\frac 1k \begin{pmatrix}
                               2k \\
                               i \\
                             \end{pmatrix}\begin{pmatrix}
                                            k \\
                                            i+1 \\
                                          \end{pmatrix}y_T^{2k-1-i}+o(1)~,
\]
which is \eqref{e}.

\bigskip
\noindent {\bf The proof of \eqref{va}.}
Recall
\begin{align*}
&\quad~\var (m_k(A_T))\nonumber\\
&=\frac{1}{p^2T^{4k}}\sum_{\bf{i_1},\bf{j_1},\bf{i_2},\bf{j_2}}\left[\E\left(E_{Q_1(\bf{i_1},\bf{j_1})}E_{Q_2(\bf{i_2},\bf{j_2})}\right)-\E\left(E_{Q_1(\bf{i_1},\bf{j_1})}\right)\E\left(E_{Q_2(\bf{i_2},\bf{j_2})}\right)\right]~.
\end{align*}
If $Q_1$ has no edges coincident with edges of $Q_2$, then
\[
\E\left(E_{Q_1(\bf{i_1},\bf{j_1})}E_{Q_2(\bf{i_2},\bf{j_2})}\right)-\E\left(E_{Q_1(\bf{i_1},\bf{j_1})}\right)\E\left(E_{Q_2(\bf{i_2},\bf{j_2})}\right)=0
\]
by independence between $E_{Q_1}$ and $E_{Q_2}$. If $Q=Q_1\bigcup Q_2$ has an overall single edge, then
\[
\E\left(E_{Q_1(\bf{i_1},\bf{j_1})}E_{Q_2(\bf{i_2},\bf{j_2})}\right)=\E\left(E_{Q_1(\bf{i_1},\bf{j_1})}\right)\E\left(E_{Q_2(\bf{i_2},\bf{j_2})}\right)=0~,
\]
so the contribution to
$\var (m_k(A_T))$ is also zero.

Now, suppose $Q=Q_1\bigcup Q_2$ contains no single edges, then there is at least one edge in $Q$ with multiplicity greater than or equal to 4, so  the number of non-coincident  I-vertices in $Q$ is at least $t_1+t_2-1$  and J-vertices is $s_1+s_2-1$. Since  $t_1+s_1-1\leq2k$ and $t_2+s_2-1\leq2k$, we have
\begin{align*}
&\quad~\var (m_k(A_T))\nonumber\\
&=\frac{1}{p^2T^{4k}}\sum_{\bf{i_1},\bf{j_1},\bf{i_2},\bf{j_2}}\left[\E\left(E_{Q_1(\bf{i_1},\bf{j_1})}E_{Q_2(\bf{i_2},\bf{j_2})}\right)-\E\left(E_{Q_1(\bf{i_1},\bf{j_1})}\right)\E\left(E_{Q_2(\bf{i_2},\bf{j_2})}\right)\right]\\
&=O\left(\frac{1}{p^2T^{4k}}T^{t_1+t_2-1}p^{s_1+s_2-1}\right)\\
&=O(p^{-2})~,
\end{align*}
which is \eqref{va}.

\bigskip
\noindent {\bf Carleman condition.}
In \cite{li},  the density function that corresponds to the Stieltjes transform $s(z)$ in \eqref{se} has been derived and it has compact support $[a,b]$, where
\begin{align}
&a=\frac 18\left(-1+20y+8y^2-(1+8y)^{3/2}\right)\cdot\mathbbm{1}_{\{y\geq 1\}}~,\nonumber\\
&b=\frac 18\left(-1+20y+8y^2+(1+8y)^{3/2}\right)~.\label{b}
\end{align}
Therefore, we have
\begin{align}\label{mkb}
m_k=\sum_{i=0}^{k-1}\frac 1k \begin{pmatrix}
                               2k \\
                               i \\
                             \end{pmatrix}\begin{pmatrix}
                                            k \\
                                            i+1 \\
                                          \end{pmatrix}y^{2k-1-i}\leq b^k~.
\end{align}
From this, it is easy to see that the Carleman condition \eqref{cal} is satisfied.

\noindent The proof of Theorem \ref{th1} is complete.
\end{proof}

\begin{remark}
For the verification of Carleman condition, it would be enough to use the Stirling's formula in $m_k$ to derive a less sharp bound $m_k\leq A^k$ for some $A\geq b$. Since the sharp bound \eqref{mkb} will also be used in Section \ref{largest}, its early introduction is thus preferred.
\end{remark}

\begin{remark}\label{re1}
The explanation of (5) in \eqref{sub} is that in the original $Q$-graph, each vertical edge  is repeated exactly twice, and then we  glue the $I$-vertex $i_l$ and $i_l+s$ in its pillar $M(Q)$, therefore, the degree of each $I$-vertex is multiple of four, which implies that the length of each subsequence in \eqref{sub} is multiple of four.
\end{remark}

\begin{remark}\label{re2}
Note that in a characteristic sequence,   subsequences in \eqref{sub} cannot intersect  each other; for example, if we arrange two subsequences 1~0~0~-1 and 1~0~0~0~0~0~0~-1 in the characteristic sequence of length $4k=16$, then the following two structures are allowed:
\begin{align*}
\begin{array}{ll}
\text{0~\re{1}~0~0~0~0~0~0~\re{-1}~\bl{1}~0~0~\bl{-1}~0~0~0} & \text{(two subsequences are parallel)}~,\\
\text{0~\re{1}~0~\bl{1}~0~0~\bl{-1}~0~\re{-1}~0~0~0~0~0~0~0} & \text{(one is completely contained in another)}~;
\end{array}
\end{align*}
while
\begin{align*}
\begin{array}{ll}
\text{0~\re{1}~0~0~0~0~0~\bl{1}~\re{-1}~0~\bl{-1}~0~0~0~0~0} & \text{(two subsequences intersect each other)}
\end{array}
\end{align*}
is not.
\end{remark}

\section{Convergence of the largest eigenvalue of $A_T$}\label{largest}
Recall that due to \eqref{mkb} in the previous section, we have the following:
\begin{align*}
\E (m_k(A_T))&=\sum_{i=0}^{k-1}\frac 1k \begin{pmatrix}
                               2k \\
                               i \\
                             \end{pmatrix}\begin{pmatrix}
                                            k \\
                                            i+1 \\
                                          \end{pmatrix}y_T^{2k-1-i}+o(1)\\
&\leq b(y_T)^k+o(1)
\end{align*}
for bounded $k$, where $b(y_T)$ is the value of $b$ in \eqref{b} while  substituting $y_T$ for $y$. The main point in this section is to improve  this estimation in order to allow a {\em growing} $k$ such that:
\begin{align}\label{ae}
\E (m_k(A_T))&=\sum_{i=0}^{k-1}\frac 1k \begin{pmatrix}
                               2k \\
                               i \\
                             \end{pmatrix}\begin{pmatrix}
                                            k \\
                                            i+1 \\
                                          \end{pmatrix}y_T^{2k-1-i}\cdot(1+o_k(1))~,
\end{align}
where this $o_k(1)$ now (depending on $k$)  tends to zero when $k\rightarrow \infty$.
For a moment, suppose we have already got \eqref{ae}, then we are able to control the largest eigenvalue of $A_T$, see the following theorem.

\begin{theorem}\label{cb}
Under the same conditions as in Theorem \ref{th1}, the largest eigenvalue of $A_T$ converges to the right endpoint $b$ defined in \eqref{b} almost surely.
\end{theorem}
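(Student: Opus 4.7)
The plan is to prove convergence by sandwich: establish $\limsup_T \lambda_{\max}(A_T) \leq b$ almost surely via a moment method combined with Borel--Cantelli, and $\liminf_T \lambda_{\max}(A_T) \geq b$ almost surely from the already established almost-sure convergence of the ESD to $F$ in Theorem \ref{th1}. The upper bound is where the hypothesis \eqref{ae} pays off; the lower bound is a routine consequence of $F$ having positive mass near its right edge.

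For the upper bound, I would exploit the positivity of $A_T = X_T X_T^T$ and the elementary inequality $\lambda_{\max}(A_T)^k \leq \tr(A_T^k) = p \cdot m_k(A_T)$, followed by Markov's inequality, to obtain
\[
P\bigl(\lambda_{\max}(A_T) \geq b + \epsilon\bigr) \leq \frac{p \cdot \E[m_k(A_T)]}{(b+\epsilon)^k}.
\]
The moment refinement \eqref{ae} together with the sharp upper estimate $m_k \leq b(y_T)^k$ from \eqref{mkb} and the fact that $b(y_T) \to b$ bound the numerator by $p \cdot (1+o_k(1))\, b(y_T)^k$, so the ratio is of order $p \cdot \alpha^{k}$ with some $\alpha = \alpha(\epsilon) < 1$ once $T$ is large. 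Choosing $k = k_T \sim c \log T$ with $c$ sufficiently large (depending on $\epsilon$) then renders this probability summable in $T$; Borel--Cantelli yields $\limsup \lambda_{\max}(A_T) \leq b + \epsilon$ almost surely, and intersecting over a sequence $\epsilon_n \downarrow 0$ finishes this half.

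For the lower bound, by Theorem \ref{th1} the ESD $F^{A_T}$ converges weakly to $F$ almost surely, and the density of $F$ as derived in \cite{li} is strictly positive on a left neighborhood of its right edge $b$. Fixing a continuity point $b - \epsilon$ of $F$, weak convergence forces $F^{A_T}((b-\epsilon,\infty)) > 0$ for all large $T$ almost surely, which in turn forces at least one eigenvalue of $A_T$ to exceed $b - \epsilon$; taking a countable intersection over rational $\epsilon > 0$ gives $\liminf \lambda_{\max}(A_T) \geq b$ almost surely. Combining the two bounds proves the theorem.

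The main obstacle is not in the two steps just described but in securing the hypothesis \eqref{ae} uniformly as $k = k_T \to \infty$. This requires revisiting the $Q$-graph classification of Theorem \ref{th1} and tracking, as explicit functions of $k$, the combinatorial constants, the $\delta p^{1/4}$ truncation budget, and the count of isomorphism classes of $Q$-graphs, so that the Category~3 contribution remains $o_k(1) \cdot b(y_T)^k$ even when $k$ grows at a logarithmic rate in $T$. One also has to redo the truncation--centralization step at a slightly sharper level than in Theorem \ref{th1} so that the replacement of the original entries by their bounded versions affects $\lambda_{\max}$ only on a set of summable probability. Once that uniformity is in hand, the Markov/Borel--Cantelli argument above proceeds without further difficulty.
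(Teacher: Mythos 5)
Your proposal is correct and follows essentially the same route as the paper: the upper bound comes from $P(l_1>b+\delta)\leq p\,\E[m_k(A_T)]/(b+\delta)^k$ combined with the refined moment estimate \eqref{ae} and a growing $k$ (you take $k\sim c\log T$ with $c=c(\epsilon)$ large, while the paper takes $k=(\log p)^{1.01}$ which makes $p^{1/k}\to 1$ uniformly in $\delta$, but both produce a summable bound), and the lower bound is the standard weak-convergence argument, which the paper phrases via a test function supported in $[b-\delta,b]$ and you phrase via Portmanteau on the open interval $(b-\epsilon,\infty)$. You also correctly identify that the real work lies in establishing \eqref{ae} for $k\to\infty$ (Theorem \ref{largek}), which is indeed where the paper invests its effort.
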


\begin{proof}(of Theorem \ref{cb})
First we show that almost surely,
\begin{align}\label{ll1}
\liminf l_1 \ge b~.
\end{align}
Indeed on the set $\{\liminf l_1<b\}$, we have $\liminf l_1<b-\delta$ for some $\delta=\delta(\omega)>0$.
Let $g:\mathbb{R}\rightarrow \mathbb{R}_{+}$ be a continuous and positive function supported on $[b-\delta,b]$, with $\int g(x)dF(x)=1$, where  $F$ is the LSD of $F^{A_T}$.  Then
\begin{align}
\liminf \int g(x)dF^{A_T}(x) \le 0~.
\end{align}
For such $\omega$, $F^{A_T}$ will not converge weakly to $F$. Since this convergence occurs almost surely by Theorem \ref{th1}, the Claim \eqref{ll1} is proved.

Next, we claim that for any $\delta>0$,
\begin{align}\label{ll2}
\sum_{p=1}^\infty P(l_1>b+\delta)<\infty~.
\end{align}
Write
\[
l_1-b=(l_1-b)^{+}-(l_1-b)^{-}~,
\]
with its positive and negative parts. Claim \eqref{ll2} implies that almost surely, $(l_1-b)^{+} \to 0$. Therefore, a.s.
\[
\limsup (l_1-b) \le \limsup (l_1-b)^{+}=0~.
\]
Combine with \eqref{ll1}, we have a.s. $\lim (l_1-b)=0$.
\noindent It remains to prove Claim \eqref{ll2}.

Since we  have
\begin{align}\label{l}
&\quad ~P(l_1>b+\delta)\leq P(\tr A_T^k\geq (b+\delta)^k)\leq\frac{\E \tr A_T^k}{(b+\delta)^k}
=\frac{p\cdot\E(m_k(A_T))}{(b+\delta)^k}~,
\end{align}
and by \eqref{ae},
\begin{align*}
p\cdot\E (m_k(A_T))&=p\sum_{i=0}^{k-1}\frac 1k \begin{pmatrix}
                               2k \\
                               i \\
                             \end{pmatrix}\begin{pmatrix}
                                            k \\
                                            i+1 \\
                                          \end{pmatrix}y_T^{2k-1-i}\cdot(1+o_k(1))\\
&\leq \left(p^{1/k}b(y_T)\right)^k\cdot(1+o_k(1))\xrightarrow{p\rightarrow \infty} b^k~,
\end{align*}
where the last convergence is due  to the choice of $k$ that $k=(\log p)^{1.01}$ (see Theorem \ref{largek}), so the term  $p^{1/k}\rightarrow 1$ when $p \rightarrow \infty$.
Once we fixed this $\delta>0$ in \eqref{l}, then the right hand side  tends to $\left(\frac{b}{b+\delta}\right)^k$, which is summable.

\noindent The proof of Theorem \ref{cb} is complete.
\end{proof}

%
%

The remaining of the section is devoted to  deriving \eqref{ae} used in the proof above.

\begin{theorem}\label{largek}
Let the conditions in Theorem \ref{th1} hold, and $k$ is an integer of size $k=(\log p)^{1.01}$ (say). Then we have
\begin{align*}
\E (m_k(A_T))&=\sum_{i=0}^{k-1}\frac 1k \begin{pmatrix}
                               2k \\
                               i \\
                             \end{pmatrix}\begin{pmatrix}
                                            k \\
                                            i+1 \\
                                          \end{pmatrix}y_T^{2k-1-i}\cdot(1+o_k(1))~.
\end{align*}
\end{theorem}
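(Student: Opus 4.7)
The plan is to retain the three-category decomposition of $Q$-graphs from the proof of Theorem \ref{th1}, and to re-examine every $O(\cdot)$ estimate uniformly in $k$, now that $k = (\log p)^{1.01}$ grows with $p$. The target is to show that Category 1 matches the asserted main term up to a factor $(1+o_k(1))$, while Categories 2 and 3 together contribute $o_k(1)$ times that main term. The upper bound $\sum_i \frac{1}{k}\binom{2k}{i}\binom{k}{i+1} y_T^{2k-1-i} \leq b(y_T)^k$ from \eqref{mkb} will serve as the convenient reference order of magnitude.

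For Category 1, the only $k$-dependent correction comes from restricting indices to be pairwise distinct, which introduces
\[
\prod_{i=0}^{t-1}\!\left(1-\tfrac{i}{T}\right)\prod_{j=0}^{s-1}\!\left(1-\tfrac{j}{p}\right) = 1+O(k^2/p),
\]
uniformly in $t,s \leq 2k$. Since $k^2/p = (\log p)^{2.02}/p = o_k(1)$, each term of the Category 1 sum \eqref{m1} acquires only a multiplicative $(1+o_k(1))$ correction; combined with the closed form \eqref{fmkfor} for $f_{t-1}(k)$ established in Section \ref{de}, this yields the stated asymptotic for the main contribution.

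For the error, Category 2 is eliminated by centering as in Theorem \ref{th1}. For Category 3, I split as before into 3(a) (some multiplicity $a_l \geq 3$) and 3(b) (all $a_l = 2$ but the base $B(Q)$ is not a tree). Two new ingredients are needed. First, an exponential bound $N_k \leq C^k$ on the number of isomorphism classes of $Q$-graphs with a prescribed profile $(t,s,m,(a_l))$, obtained by encoding each graph via its characteristic sequence (alphabet of size four, length $4k$) together with bounded gluing data for the cycles or high-multiplicity edges. Second, the truncation step in Appendix A of \cite{li} permits us to take $\delta = \delta(p) \to 0$ as slowly as desired, so that in Category 3(a) the factor $M^{4k-2m} = \delta^{4k-2m}\,p^{k-m/2}$ carries a small parameter to a power of order $k$. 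Putting these together, Category 3(a) is bounded by $p^{-1/2}(C\delta^4)^k$, which is $o_k(b^k)$ once $\delta$ is small enough that $C\delta^4 < b(y_T)$; and Category 3(b) is bounded by $p^{-1} C^k$, which is $o_k(b^k)$ by a finer combinatorial argument showing that $C \leq b(y_T)$ for the specific non-tree structures allowed.

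The principal obstacle is establishing the sharp exponential bound $N_k \leq C^k$ with a constant $C$ compatible with $b(y_T)$, especially for the non-tree Category 3(b) where an inflated $C$ would not be absorbed by the single $p^{-1}$ gain. The characteristic sequence alone contributes $4^{2k}$ possibilities; the additional choices from non-tree gluings and high-multiplicity identifications must be tracked via a refined encoding that respects the innovation/non-innovation structure imposed by the odd/even positions. A secondary technical point is to confirm that the Lindeberg-type truncation in Appendix A of \cite{li} can indeed be sharpened to furnish $\delta = \delta(p) \to 0$ slowly enough for all preceding estimates to remain valid under $k = (\log p)^{1.01}$; this is a routine but not entirely mechanical adaptation.
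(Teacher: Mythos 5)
Your proposal identifies the right decomposition but the Category~3(b) estimate rests on a bound that cannot hold and that you yourself flag as "the principal obstacle."  Concretely, you propose to establish $N_k \leq C^k$ with a constant $C$ satisfying $C \leq b(y_T)$, so that the single $p^{-1}$ gain can absorb the error.  However, the encoding you sketch (characteristic sequence on an alphabet of size four and length $4k$, plus gluing data) already gives $C \geq 4^4=256$, and even the sharper version (alphabet size two, since only $\{0,1\}$ and $\{0,-1\}$ are legal at even and odd positions respectively) gives $C \geq 16$.  Meanwhile $b(y_T)=\frac{1}{8}\bigl(-1+20y_T+8y_T^2+(1+8y_T)^{3/2}\bigr)$ is below $16$ for all $y_T$ up to roughly $3$, so the required inequality $C\leq b(y_T)$ fails on an open range of aspect ratios including $y_T=1$.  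Since the theorem is asserted for all $y\in(0,\infty)$, the proposed route cannot close.

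The paper avoids needing any absolute bound of the form $C^k\leq b(y_T)^k$ by making the comparison with Category~1 \emph{structural} rather than numerical.  For $t>1$, a Category~3 characteristic sequence is decomposed as a Category~1 ``core'' satisfying conditions (1)--(5) (with $t-1$ ones, counted by $f_{t-1}(k)$) together with a placement of the $2k-t-s+1$ surplus $-1$'s in at most $\binom{2k-t}{\,s-1\,}$ positions.  After inserting the weight $T^t p^s M^{4k-2m}$ and factoring out $\frac{\delta^2}{\sqrt{p}}\,y_T^{2k-t}$, the binomial tail is summed geometrically in $s$, and the total collapses to $O\!\left(\frac{k\delta^2}{\sqrt{p}}\right)\cdot\sum_{t}f_{t-1}(k)\,y_T^{2k-t}$, i.e., $o_k(1)$ times the Category~1 main term itself; the case $t=1$ is handled analogously with $\binom{2k}{2k-s}$ placements, giving $O\!\left(\frac{k\delta^2}{\sqrt{p}}\,y_T^{2k-1}\right)$ which is again $o_k(1)$ times the main term since $y_T^{2k-1}\leq m_k$.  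In other words the paper compares Category~3 to Category~1 term by term, so the combinatorial growth rate never has to be beaten by $b(y_T)$ outright; only a polynomial gain $p^{-1/2}$ and the slow growth $k=(\log p)^{1.01}$ are needed.  This is the missing idea in your argument.

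Two smaller points: your $(1+O(k^2/p))$ correction for distinct indices in Category~1 is correct and is indeed needed (the paper elides it, but it is $o_k(1)$ for $k=(\log p)^{1.01}$); and your handling of Category~3(a) is essentially sound precisely because the moment truncation supplies $\delta^{4k-2m}$, an exponentially small factor unavailable in~3(b) --- which is exactly why 3(b) is where your approach breaks and where a different idea is required.
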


\begin{proof}(of Theorem \ref{largek})
When $k \rightarrow \infty$, the term $\#\{\text{isomorphism class}\}$ in \eqref{b1} is no more a constant order. Then the main task is to show that the contribution of Category 3 to the $k$-th moment of $A_T$ when $k\rightarrow \infty$ can still be negligible compared with Category 1.

Since in Category 3, we have $t+s-1<2k$.  Using the previous notion of the characteristic sequence, we have $\#\{1\}=t-1$ and $\#\{-1\}=2k-s$, and this is equivalent to the fact that $\#\{1\}<\#\{-1\}$.

First, we consider the case that $t=1$, which is to say  $\#\{1\}=0$ and $\#\{-1\}=2k-s$. The way we choose $2k-s$ positions from the total $2k$ (the total number of length is $4k$, only the odd ones are allowed for  ``-1'') is
$$\begin{pmatrix}
     2k \\
     2k-s \\
   \end{pmatrix}.
$$
Then since we have $s$ noncoincident $J$-vertices on the $J$-line, the noncoincident vertical edges is at most $2s$ (since we have each edge repeated at least twice). Therefore, the expectation
\begin{align*}
\E[\varepsilon_{j_1\,i_1}\varepsilon_{j_1\,i_2}\varepsilon_{j_2\,s+i_2}\varepsilon_{j_2\,s+i_3}\varepsilon_{j_3\,i_3}\varepsilon_{j_3\,i_4}\varepsilon_{j_4\,s+i_4}\varepsilon_{j_4\,s+i_5}\cdots \varepsilon_{j_{2k}\,s+i_{2k}}\varepsilon_{j_{2k}\,s+i_1}]
\end{align*}
is bounded by $M^{4k-2s}$.
Therefore, we have the contribution to \eqref{etrak}:
\begin{align}\label{11}
&\quad \quad\frac{1}{pT^{2k}}\sum_{s=1}^{2k-1}Tp^sM^{4k-2s}\begin{pmatrix}
                                                 2k \\
                                                 2k-s \\
                                               \end{pmatrix}\nonumber\\
&=\left(\frac{\sqrt p}{\delta^2}\right)^{2k}p^{k-1}T^{1-2k}\delta^{4k}\sum_{s=1}^{2k-1}\left(\frac{\sqrt p}{\delta^2}\right)^{s-2k}\begin{pmatrix}
                                                                           2k \\
                                                                           s \\
                                                                         \end{pmatrix}\nonumber\\
&=\left(\frac{\sqrt p}{\delta^2}\right)^{2k}p^{k-1}T^{1-2k}\delta^{4k}\sum_{i=1}^{2k-1}\left(\frac{\delta^2}{\sqrt p}\right)^{i}\begin{pmatrix}
                                                                           2k \\
                                                                           i \\
                                                                         \end{pmatrix}\nonumber\\
&\leq\left(\frac{\sqrt p}{\delta^2}\right)^{2k}p^{k-1}T^{1-2k}\delta^{4k}\sum_{i=1}^{2k-1}\left(\frac{2k\delta^2}{\sqrt p}\right)^{i}\nonumber\\
&=O\left(\left(\frac{\sqrt p}{\delta^2}\right)^{2k}p^{k-1}T^{1-2k}\delta^{4k}\frac{2k\delta^2}{\sqrt p}\right)\nonumber\\
&=O\left(\frac{2k\delta^2}{\sqrt p}y^{2k-1}\right)~,
\end{align}
which is due to the fact that $k=(\log p)^{1.01}$.

Then consider the case that $t>1$. Since $\#\{1\}=t-1<\#\{-1\}=2k-s$, we can first construct a characteristic sequence that satisfies (1)(2)(3)(4)(5) in \eqref{sub}. Therefore, we have the degree of each $I$-vertex at least four and each edge in the $Q$-graph repeated exactly twice, which ensures that the $Q$-graph will not fall in Category 2. And the possible ways for constructing such a characteristic sequence is $f_{t-1}(k)$ by definition.
Since in the characteristic sequence, $2(t-1)$ positions have been taken to place the ``1'' and ``-1'',   there leaves $4k-2(t-1)-2/2$ (the sequence starts and ends with a zero, so we should exclude the two positions at the beginning and at the end, and  also ``-1'' appears in the odd positions, so we should divide it by two) positions to place the remaining ``-1'', whose number is $2k-t-s+1$, so the choice  is bounded by
$$
\begin{pmatrix}
  2k-t \\
  2k-t-s+1 \\
\end{pmatrix}.
$$

Let $m$ be the number of noncoincident vertical edges, which is no less than $t+s-1$, see Lemma \ref{lemmabai}, then the expectation \begin{align*}
\E[\varepsilon_{j_1\,i_1}\varepsilon_{j_1\,i_2}\varepsilon_{j_2\,s+i_2}\varepsilon_{j_2\,s+i_3}\varepsilon_{j_3\,i_3}\varepsilon_{j_3\,i_4}\varepsilon_{j_4\,s+i_4}\varepsilon_{j_4\,s+i_5}\cdots \varepsilon_{j_{2k}\,s+i_{2k}}\varepsilon_{j_{2k}\,s+i_1}]
\end{align*}
is bounded by $M^{4k-2m}\leq M^{4k-2(t+s-1)}$. Finally, the contribution to \eqref{etrak} is bounded by:
\begin{align}\label{22}
&\quad \quad\frac{1}{pT^{2k}}\sum_{s}\sum_{t}M^{4k-2(t+s-1)}f_{t-1}(k)T^tp^s\begin{pmatrix}
                                                              2k-t \\
                                                              s-1 \\
                                                            \end{pmatrix}\nonumber\\
&=\frac{1}{pT^{2k}}\sum_{t=1}^kM^{4k-2t+2}f_{t-1}(k)T^t\sum_{s=1}^{2k-t}\left(\frac{p}{M^2}\right)^s\begin{pmatrix}
                                                                                                      2k-t \\
                                                                                                      s-1 \\
                                                                                                    \end{pmatrix}\nonumber\\
&=\frac{1}{pT^{2k}}\sum_{t=1}^kM^{4k-2t+2}f_{t-1}(k)T^t\left(\frac{p}{M^2}\right)^{2k-t}\sum_{s=1}^{2k-t}\left(\frac{p}{M^2}\right)^{s-2k+t}\begin{pmatrix}
                                                                                                      2k-t \\
                                                                                                      s-1 \\
                                                                                                    \end{pmatrix}\nonumber\\
&=\frac{1}{pT^{2k}}\sum_{t=1}^kM^{4k-2t+2}f_{t-1}(k)T^t\left(\frac{p}{M^2}\right)^{2k-t}\sum_{l=0}^{2k-t-1}\left(\frac{M^2}{p}\right)^{l}\begin{pmatrix}
                                                                                                      2k-t \\
                                                                                                      l+1 \\
                                                                                                    \end{pmatrix}\nonumber\\
&\leq\frac{1}{pT^{2k}}\sum_{t=1}^kM^{4k-2t+2}f_{t-1}(k)T^t\left(\frac{p}{M^2}\right)^{2k-t}\sum_{l=0}^{2k-t-1}\left(\frac{M^2}{p}\right)^{l}(2k-t)^{l+1}\nonumber\\
&=O\left(\frac{k}{pT^{2k}}\sum_{t=1}^kM^{4k-2t+2}f_{t-1}(k)T^t\left(\frac{p}{M^2}\right)^{2k-t}\right)\nonumber\\
&=O\left(\frac{k\delta^2}{\sqrt p}\sum_{t=0}^{k-1}\frac 1k \begin{pmatrix}
                                                             2k \\
                                                             t \\
                                                           \end{pmatrix}\begin{pmatrix}
                                                                          k \\
                                                                          t+1 \\
                                                                        \end{pmatrix}y_T^{2k-t-1}~.
\right)
\end{align}
Combine \eqref{11}, \eqref{22} with the fact that $k=(\log p)^{1.01}$ leads to the fact that the contribution of $Q_3$ to \eqref{etrak} is
\begin{align*}
\sum_{t=0}^{k-1}\frac 1k \begin{pmatrix}
                                                             2k \\
                                                             t \\
                                                           \end{pmatrix}\begin{pmatrix}
                                                                          k \\
                                                                          t+1 \\
                                                                        \end{pmatrix}y_T^{2k-t-1}\cdot o_k(1)~.
\end{align*}
The proof of Theorem \ref{largek} is complete.
\end{proof}

\section{Derivation of some master recursion formula}\label{de}
In this section, we derive some recursions that further leads to the results in Theorem \ref{th1}.

First recall the definition of $f_m(k)$, which is the number of $M$-pillars whose characteristic sequence  satisfies the following conditions:
\begin{enumerate}[{(1)}]
  \item The total length of the characteristic sequence is $4k$;
  \item The sequence starts with a zero and ends with a zero;
  \item The number ``1'' appears only in the even position in the sequence and the number ``-1'' only in the odd position;
  \item The total number of ``1'' in  the characteristic sequence is $m$;
  \item The sequences are made with  the following subsequence structure:
\begin{align*}
& 1\underbrace{00}_{\text{two}}-1\nonumber\\
&1\underbrace{000000}_{\text{six}}-1\nonumber\\
&1\underbrace{0000000000}_{\text{ten}}-1\nonumber\\
&\quad\quad\cdots\nonumber\\
&1\underbrace{0000\cdots 00000}_{4k-6}-1~.
\end{align*}
\end{enumerate}

Also, we define another $M$-pillar, whose characteristic sequence also satisfies the above condition $(1)(3)(4)(5)$, but with $(2)$ replaced by the following $(2)^{*}$:

\begin{center}
$(2)^{*}$ The sequence starts with a zero and ends with three zeros.
\end{center}

\noindent We denote $g_m(k)$ as the number of such $M$-pillar satisfying $(1)(2)^{*}(3)(4)(5)$.

\subsection{Master   recursions  related to $f_m(k)$ and $g_m(k)$}

In this subsection, we derive two master   recursions  related to $f_m(k)$ and $g_m(k)$.

Once a characteristic sequence with length $2k$ is given, we denote  $S_n$ ($1\leq n\leq2k$) as the partial sums of its first $n$ elements.  We plot the points $(n,S_n)$, then connect  $(n,S_n)$ and $(n+1,S_{n+1})$ with a straight line segment.  For example, the random walk that corresponds to the characteristic sequence (0~1~0~1~0~0~-1~0~-1~0~0~0) is in the following Figure \ref{ran}.
\begin{figure}[h!]
\centering
\includegraphics[width=10cm]{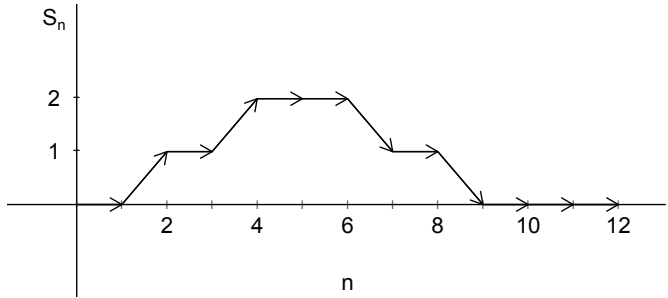}
\caption{\small The random walk that corresponds to the characteristic sequence (0~1~0~1~0~0~-1~0~-1~0~0~0).}\label{ran}
\end{figure}

\begin{definition}
We say that there is a  \textbf{return to the origin} at time $n$, if $S_n=0$.
\end{definition}
\begin{definition}
A random walk is said to have a  \textbf{first return to the origin} at time $n$, if $n>0$, and $S_{m}\neq 0$ for all $m<n$.
\end{definition}

\noindent In Figure \ref{ran}, the first return occurs at time $n=9$.
\smallskip

\subsubsection{Master recursion $1$}
First, we start with $f_m(k)$. Suppose the first return occurs at time $i$ and $\max_{0\leq n\leq i} S_n=s$, which means that in the corresponding characteristic sequence, the number of ``1'' is $s$. For the reason that the length of the subsequence structures list in (5) are all multiplicity of four and condition (2), all the  possibilities of $i$ are $i=5,7, \cdots, 4k-3, 4k-1$, we divide it into two cases:
\begin{align*}
  \begin{array}{lll}
    \text{Case ~1}: & i_1=4j-3 & 2 \leq j\leq k~,\\
    \text{Case ~2}: & i_2=4j-1 & 2 \leq j\leq k~.
  \end{array}
\end{align*}
We partition the random walk into two parts according to the first return, see Figure \ref{twoparts}.

\noindent Case 1:
First, we consider the second part, which is of length $4k-i_1=4k-4j+3$, with the number of ``1'' being $m-s$ in its corresponding characteristic sequence. But this time, the sequence may not start with a ``0'' (once it returns to the origin, it can depart immediately, and in this case, the sequence starts with a ``1''). Therefore, we add a zero in the front artificially, leading to a  total length of $4k-4j+4$, which starts and ends with a zero. So the way of constructing such a sequence is  $f_{m-s}(k-j+1)$. Then consider the first part, which has a total length of $i_1$. Suppose the first departure from the origin is at time $n-1$, where $n=2,~6,~\cdots, ~i_1-7,~i_1-3$ (also, it means that the first arrival at 1 is at the time $n$), then if we move the axes to the point $(n,1)$ (see the blue axes in Figure \ref{twoparts}, the original axes are in black) and consider the random walk above the new x-axe (see the red parts in Figure \ref{twoparts}), which has the length $i_1-n-1$. Further, this random walk starts and ends with a zero, and if we add two more zeros in its end, it will lead to a random walk with a total length of $i_1-n+1=4j-n-2$, starts with a zero and ends with three zeros, with the number of ``1'' being $s-1$. Therefore, the number of such  random walk is $g_{s-1}(\frac{4j-n-2}{4})$ according to the definition.
So we have got  the total contribution  of Case 1 is:
\begin{align}\label{c11}
\Sum{s=1, \cdots, m}{j=2, \cdots, k}\left(\sum_{n=2}^{4j-6}g_{s-1}\left(\frac{4j-n-2}{4}\right)\right)\cdot f_{m-s}(k-j+1)~.
\end{align}

\begin{figure}[h!]
\centering
\includegraphics[width=12cm]{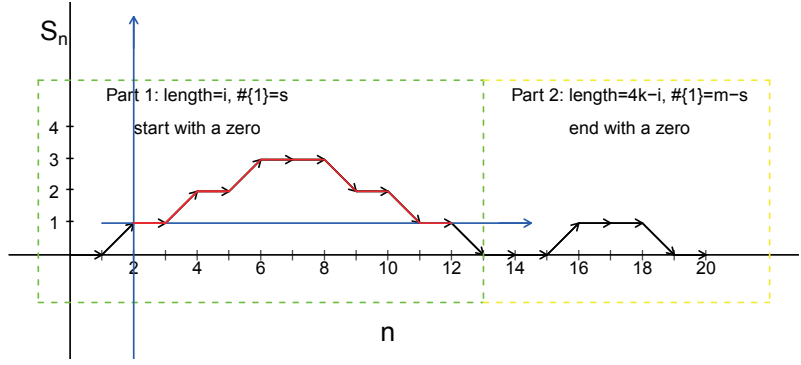}
\caption{\small Illustration of $f_m(k)$.}\label{twoparts}
\end{figure}

\noindent Case 2:
We follow the same route as in Case 1. After the first return, the remaining length is $4k-i_2=4k-4j+1$, then we add a zero in front and two zeros in the end, which leads to a random walk of total length $4k-4j+4$, starts with a zero and ends with three zeros. And the number of  ``1'' is $m-s$. By definition, the way of constructing such a random walk is $g_{m-s}(k-j+1)$. Then for the first part, also suppose the first departure from the origin is at time $n-1$ $(n=4,~8,~\cdots, ~i_2-7,~i_2-3)$ and consider the part of the random walk that is above the new x-axes (with the new origin located at $(n,1)$), which is of total length  $i_2-n-1$. Then we add two more zeros in the end, results in a random walk of total length $i_2-n+1=4j-n$, starts with a zero and ends with three zeros, and the number of ``1'' is  $s-1$. The way of constructing such a random walk is $g_{s-1}(\frac{4j-n}{4})$. And combine these two parts, the contribution of Case 2 is:
\begin{align}\label{c21}
\Sum{s=1, \cdots, m}{j=2, \cdots, k}\left(\sum_{n=4}^{4j-4}g_{s-1}\left(\frac{4j-n}{4}\right)\right)\cdot g_{m-s}(k-j+1)~.
\end{align}

\noindent Overall, combine \eqref{c11} and \eqref{c21} leads to the following recursion:
\begin{align}\label{fm}
f_{m}(k)&=\Sum{s=1, \cdots, m}{j=2, \cdots, k}\left(\sum_{n=2}^{4j-6}g_{s-1}\left(\frac{4j-n-2}{4}\right)\right)\cdot f_{m-s}(k-j+1)\nonumber\\
&\quad~+\Sum{s=1, \cdots, m}{j=2, \cdots, k}\left(\sum_{n=4}^{4j-4}g_{s-1}\left(\frac{4j-n}{4}\right)\right)\cdot g_{m-s}(k-j+1)\nonumber\\
&=\Sum{s=1, \cdots, m}{j=2, \cdots, k}\left(\sum_{n=4}^{4j-4}g_{s-1}\left(\frac{4j-n}{4}\right)\right)\cdot \left[f_{m-s}(k-j+1)+g_{m-s}(k-j+1)\right]~.
\end{align}

\subsubsection{Master recursion $2$}
Then we start with $g_m(k)$.
Also suppose the first return time is $i$, where $i=5,7,\cdots,4k-5,4k-3$ (in the definition of $g_m(k)$, the characteristic sequence ends with three zeros, so the maximum value of $i$ is $4k-3$ here). We divide  these $i$  into two parts:
\begin{align*}
  \begin{array}{lll}
    \text{Case ~1}: & i_1=4j-5 & 3 \leq j\leq k ~,\\
    \text{Case ~2}: & i_2=4j-3 & 2 \leq j\leq k~.
  \end{array}
\end{align*}
As before, we suppose the number of ``1'' is $s$ in the first part.
\smallskip

\noindent Case 1:
First for the second part, which ends with three zeros but may not start with a zero. Since the  total length is $4k-i_1=4k-4j+5$, if we add a zero in its beginning and remove the last two zeros, then it will result in a random walk whose characteristic sequence  starts and ends with a zero, whose length is $4k-4j+5+1-2=4k-4j+4$, with the number of ``1'' being $m-s$. The total number of constructing such a random walk is  $f_{m-s}(k-j+1)$. Then for the first part, suppose the first departure from the origin is at time $n-1$, where $n=4,8,\cdots, i_1-7, i_1-3$. We do the same thing as before, add the new axes whose origin is located at $(n,1)$. Then we consider the part  above this new x-axe, see also the red part in Figure \ref{gmktwoparts}, whose length is  $i_1-n-1$. We add two more zeros in the end, it actually becomes the random walk with a total length of $i_1-n+1=4j-n-4$, starts with a zero and ends with three zeros, with the number of ``1'' being $s-1$.  The way of constructing such a random walk is $g_{s-1}\left(\frac{4j-n-4}{4}\right)$. Combine all this, the contribution of Case 1 is
\begin{align}\label{c12}
\Sum{s=1, \cdots, m}{j=3, \cdots, k}\left(\sum_{n=4}^{4j-8}g_{s-1}\left(\frac{4j-n-4}{4}\right)\right)\cdot f_{m-s}(k-j+1)~.
\end{align}

\begin{figure}[h!]
\centering
\includegraphics[width=12cm]{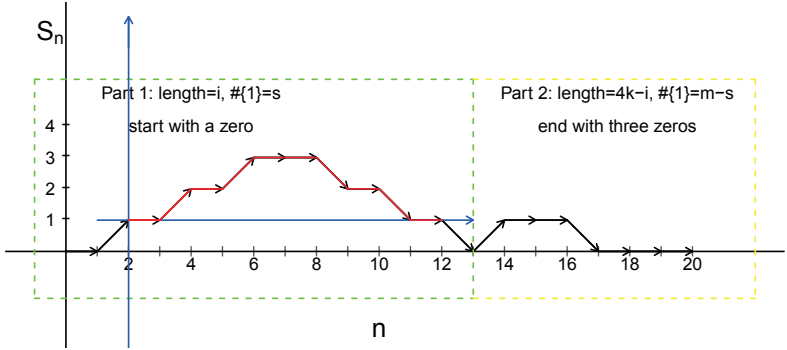}
\caption{\small Illustration of $g_m(k)$.}\label{gmktwoparts}
\end{figure}

\noindent Case 2:
For the second part, we add a zero in the front, which leads to a random walk of total length $4k-i_2+1=4k-4j+4$, starts with a zero and ends with three zeros, with the number of ``1'' being $m-s$, so the way of constructing such a random walk is $g_{m-s}(k-j+1)$.
Then for the first part, we also consider the part that above the new x-axe. Since  $n=2,6,\cdots, i_2-7,i_2-3$ this time,  we add two more zeros in the end and this leads to a random walk with total length of $i_2-n+1=4j-3-n+1$, starts with a zero and ends with three zeros, with the number of ``1'' being $s-1$, and the total way of constructing such a random walk is $g_{s-1}\left(\frac{4j-2-n}{4}\right)$.
Combine these two parts, the contribution of Case 2 is
\begin{align}\label{c22}
\Sum{s=1, \cdots, m}{j=2, \cdots, k}\left(\sum_{n=2}^{4j-6}g_{s-1}\left(\frac{4j-n-2}{4}\right)\right)\cdot g_{m-s}(k-j+1)~.
\end{align}

Combine \eqref{c12} and \eqref{c22} leads to the recursion that
\begingroup
\allowdisplaybreaks
\begin{eqnarray}\label{gm}
g_m(k)&=&\Sum{s=1, \cdots, m}{j=3, \cdots, k}\left(\sum_{n=4}^{4j-8}g_{s-1}\left(\frac{4j-n-4}{4}\right)\right)\cdot f_{m-s}(k-j+1)\nonumber\\
&&\quad+\Sum{s=1, \cdots, m}{j=2, \cdots, k}\left(\sum_{n=2}^{4j-6}g_{s-1}\left(\frac{4j-n-2}{4}\right)\right)\cdot g_{m-s}(k-j+1)\nonumber\\
&=&\Sum{s=1, \cdots, m}{j=3, \cdots, k}\left(\sum_{n=6}^{4j-6}g_{s-1}\left(\frac{4j-n-2}{4}\right)\right)\cdot (f_{m-s}(k-j+1)+g_{m-s}(k-j+1))\nonumber\\
&&\quad+\Sum{s=1,\cdots,m}{j=2,\cdots,k}g_{s-1}(j-1)g_{m-s}(k-j+1)\nonumber\\
&=&\Sum{s=1, \cdots, m}{j=3, \cdots, k}\left(\sum_{n=8}^{4j-4}g_{s-1}\left(\frac{4j-n}{4}\right)\right)\cdot (f_{m-s}(k-j+1)+g_{m-s}(k-j+1))\nonumber\\
&&\quad+\Sum{s=1,\cdots,m}{j=2,\cdots,k}g_{s-1}(j-1)g_{m-s}(k-j+1)~.
\end{eqnarray}
\endgroup

As a result, \eqref{fm} and \eqref{gm} lead to the two recursions  that related to $f_m(k)$ and $g_m(k)$:
\begin{align}\label{fg1}
&\left\{
\begin{array}{l}
 f_{m}(k)-g_{m}(k)=\displaystyle\Sum{s=1,\cdots, m}{j=2, \cdots, k} g_{s-1}(j-1)f_{m-s}(k-j+1)\\[10mm]
 g_{m}(k)=\displaystyle\Sum{s=1, \cdots, m}{j=3, \cdots, k}\left(\sum_{l=2}^{j-1}g_{s-1}(j-l)\right)\cdot \left(f_{m-s}(k-j+1)+g_{m-s}(k-j+1)\right)\\[3mm]
\end{array}
\right.\nonumber\\
 &\quad \quad\quad\quad\quad+\displaystyle\Sum{s=1,\cdots,m}{j=2,\cdots,k}g_{s-1}(j-1)g_{m-s}(k-j+1)
\end{align}

\subsection{Recursions related to  $F_k(z)$ and $G_k(z)$}
Define $F_k(z)=\sum_{m=0}^kf_{m}(k)z^m$ and $G_k(z)=\sum_{m=0}^kg_{m}(k)z^m$.
Recall the definition of $f_m(k)$ and  $g_{m}(k)$, where  $m\leq k-1$, so we make the convention that $f_k(k)=g_k(k)=0$ and $f_0(k)=g_0(k)=1$. In this subsection, the main purpose is to derive the following two recursions that related to  $F_k(z)$ and $G_k(z)$:
\begin{align}\label{fkgk}
&\left\{
\begin{array}{l}
F_k(z)-G_k(z)=z\displaystyle\sum_{j=2}^kG_{j-1}(z)\cdot F_{k-j+1}(z)\\[5mm]
G_k(z)=1+z\displaystyle\sum_{j=3}^k\sum_{l=2}^{j-1}G_{j-l}(z)\left(F_{k-j+1}(z)+G_{k-j+1}(z)\right)
\end{array}
\right.\nonumber\\
&\quad \quad \quad \quad \quad \quad +z\displaystyle\sum_{j=2}^kG_{j-1}(z)G_{k-j+1}(z)~.
\end{align}

By multiplying $z^m$ on both sides in  the first recursion in \eqref{fg1},
and then summing from $m=1$ to $k$,
the left side equals to:
\begin{align*}
\sum_{m=1}^kf_{m}(k)z^m-\sum_{m=1}^kg_{m}(k)z^m&=\sum_{m=0}^kf_{m}(k)z^m-\sum_{m=0}^kg_{m}(k)z^m\\
&=F_k(z)-G_k(z)~,
\end{align*}
and the right side equals to
\begin{align}\label{1}
z\sum_{m=1}^k\Sum{s=1,\cdots, m}{j=2, \cdots, k} g_{s-1}(j-1)z^{s-1}\cdot f_{m-s}(k-j+1)z^{m-s}~.
\end{align}
Now consider
\begin{align*}
G_{j-1}(z)\cdot F_{k-j+1}(z)&=\sum_{m=0}^{j-1}g_{m}(j-1)z^m\cdot \sum_{n=0}^{k-j+1}f_{n}(k-j+1)z^n\\
&:=\sum_{s=0}^ka_s\cdot z^s~,
\end{align*}
where
\begin{align}\label{as}
&s=m+n~,\nonumber\\
&a_s=\sum_{m+n=s}g_{m}(j-1)f_{n}(k-j+1)~.
\end{align}
Then \eqref{1} equals to
\begin{align*}
z\sum_{j=2}^k\sum_{m=1}^ka_{m-1}\cdot z^{m-1}&=z\sum_{j=2}^k\sum_{m=0}^{k-1}a_{m}\cdot z^{m}=z\sum_{j=2}^k\sum_{m=0}^{k}a_{m}\cdot z^{m}\\
&=z\sum_{j=2}^kG_{j-1}(z)\cdot F_{k-j+1}(z)~,
\end{align*}
where the second equality is due to the fact that $a_k=0$ (since in \eqref{as}, we have $m\leq j-2$ and $n \leq k-j$ by definition, then $s=m+n \leq k-2$, so the term $a_{k-1}=a_k=0$).
Therefore, we have got
\begin{align*}
F_k(z)-G_k(z)=z\sum_{j=2}^kG_{j-1}(z)\cdot F_{k-j+1}(z)~,
\end{align*}
which is the first recursion in \eqref{fkgk}.

Next, from the second recursion in \eqref{fg1}, we have:
\begin{align}\label{gmk}
g_{m}(k)&=\underbrace{\Sum{s=1, \cdots, m}{j=3, \cdots, k}\left(\sum_{l=2}^{j-1}g_{s-1}(j-l)\right)\cdot \left(f_{m-s}(k-j+1)+g_{m-s}(k-j+1)\right)}_{(\rom{1})}\nonumber\\
&+\underbrace{\Sum{s=1,\cdots,m}{j=2,\cdots,k}g_{s-1}(j-1)g_{m-s}(k-j+1)}_{(\rom{2})}~.
\end{align}

\noindent Consider
\begin{align}
G_{j-l}(z)F_{k-j+1}(z)&=\sum_{u=0}^{j-l}g_{u}(j-l)z^u\cdot \sum_{v=0}^{k-j+1}f_{v}(k-j+1)z^v\nonumber\\
&:=\sum_{n=0}^{k+1-l}b_nz^n~,
\end{align}
where $n=u+v$, $b_n=\sum_{u+v=n}g_{u}(j-l)f_{v}(k-j+1)$, also, we have $u\leq j-l-1$ and $v\leq k-j$, which leads to $n=u+v\leq k-l-1$, so the terms that correspond to $n=k-l+1,~k-l$ equal zero.
For the same reason,
\begin{align}
G_{j-l}(z)G_{k-j+1}(z)&=\sum_{u=0}^{j-l}g_{u}(j-l)z^u\cdot \sum_{v=0}^{k-j+1}g_{v}(k-j+1)z^v\nonumber\\
&:=\sum_{n=0}^{k+1-l}c_nz^n~,
\end{align}
where $n=u+v$, $c_n=\sum_{u+v=n}g_{u}(j-l)g_{v}(k-j+1)$ and  the terms that correspond to $n=k-l+1,~k-l$ equal zero.
And
\begin{align}
G_{j-1}(z)G_{k-j+1}(z)&=\sum_{u=0}^{j-l}g_{u}(j-l)z^u\cdot \sum_{v=0}^{k-j+1}g_{v}(k-j+1)z^v\nonumber\\
&:=\sum_{n=0}^{k}d_nz^n~,
\end{align}
where $n=u+v$, $d_n=\sum_{u+v=n}g_{u}(j-1)g_{v}(k-j+1)$ and  the terms that correspond to $n=k,~k-1$ equal zero since $u\leq j-2$ and $v\leq k-j$ lead to $n=u+v \leq k-2$.

We multiply $z^m$ on both sides of \eqref{gmk} and sum from $m=1$ to $k$, then the left side equals to
\begin{align}\label{left}
G_k(z)-1~.
\end{align}

Now consider the part $(\rom1)$:
\begin{align}\label{r1}
\sum_{m=1}^k(\rom1)\cdot z^m&=\sum_{m=1}^k\sum_{j=3}^k\sum_{l=2}^{j-1}(b_{m-1}+c_{m-1})\cdot z^m\nonumber\\
&=z\sum_{j=3}^k\sum_{l=2}^{j-1}\left[\sum_{n=0}^{k-1}b_nz^n+\sum_{n=0}^{k-1}c_nz^n\right]\nonumber\\
&=z\sum_{j=3}^k\sum_{l=2}^{j-1}\left[\sum_{n=0}^{k-l-1}b_nz^n+\sum_{n=0}^{k-l-1}c_nz^n\right]\nonumber\\
&=z\sum_{j=3}^k\sum_{l=2}^{j-1}\left[\sum_{n=0}^{k-l+1}b_nz^n+\sum_{n=0}^{k-l+1}c_nz^n\right]\nonumber\\
&=z\sum_{j=3}^k\sum_{l=2}^{j-1}\left[G_{j-l}(z)F_{k-j+1}(z)+G_{j-l}(z)G_{k-j+1}(z)\right]~.
\end{align}
And for the part $(\rom2)$,
\begin{align}\label{r2}
\sum_{m=1}^k(\rom2)\cdot z^m&=\sum_{m=1}^k\sum_{j=2}^kd_{m-1}\cdot z^m=z\sum_{j=2}^k\sum_{m=0}^{k-1}d_{m}\cdot z^{m}\nonumber\\
&=z\sum_{j=2}^k\sum_{m=0}^{k}d_{m}\cdot z^{m}=z\sum_{j=2}^kG_{j-1}(z)G_{k-j+1}(z).
\end{align}
Combining  \eqref{gmk}, \eqref{left}, \eqref{r1} and \eqref{r2}, we have got
\begin{align*}
G_k(z)&=1+z\sum_{j=3}^k\sum_{l=2}^{j-1}\left[G_{j-l}(z)F_{k-j+1}(z)+G_{j-l}(z)G_{k-j+1}(z)\right]\nonumber\\
&\quad  +z\sum_{j=2}^kG_{j-1}(z)G_{k-j+1}(z)~,
\end{align*}
which is the second recursion in \eqref{fkgk}.

\subsection{Equations  related to $F(z,x)$ and $G(z,x)$}
Define $F(z,x)=\sum_{k=0}^{\infty}F_k(z)x^k$, $G(z,x)=\sum_{k=0}^{\infty}G_k(z)x^k$ and the term that corresponding to $k=0$ equals $0$. In this section, we derive  the following equations:
\begin{align}\label{FG}
&\left\{
\begin{array}{l}
F(z,x)-G(z,x)=zF(z,x)G(z,x)~,\\[2mm]
G(z,x)=\displaystyle\sum_{k=1}^{\infty}x^k+z\displaystyle\sum_{k=2}^{\infty}F(z,x)G(z,x)x^{k-1}+z\displaystyle\sum_{k=2}^{\infty}G(z,x)G(z,x)x^{k-1}
\end{array}
\right.\nonumber\\
&\quad \quad \quad \quad \quad +zG(z,x)G(z,x)~,
\end{align}
which will lead to the solutions of $F(z,x)$ and $G(z,x)$ as functions of $z$ and $x$.

Since the first recursion in \eqref{fkgk}:
\begin{align*}
F_k(z)-G_k(z)=z\sum_{j=2}^kG_{j-1}(z)\cdot F_{k-j+1}(z)~,
\end{align*}
we multiply $x^k$ on both sides and do summation from $k=1$ to $\infty$, then the left side is exactly $F(z,x)-G(z,x)$.
The right side is
\begin{align*}
z\sum_{k=2}^{\infty}\sum_{j=2}^kG_{j-1}(z)F_{k-j+1}(z)x^k=zF(z,x)G(z,x)~,
\end{align*}
which leads to the first equation in \eqref{FG}:
\begin{align*}
F(z,x)-G(z,x)=zF(z,x)G(z,x)~.
\end{align*}
Then we denote the second  recursion in \eqref{fkgk} as follows:
\begin{align}
G_k(z)&=1+\underbrace{z\sum_{j=3}^k\sum_{l=2}^{j-1}G_{j-l}(z)F_{k-j+1}(z)}_{(\rom1)}+\underbrace{z\sum_{j=3}^k\sum_{l=2}^{j-1}G_{j-l}(z)G_{k-j+1}(z)}_{(\rom2)}\nonumber\\
&\quad+\underbrace{z\sum_{j=2}^kG_{j-1}(z)G_{k-j+1}(z)}_{(\rom3)}~.
\end{align}
We multiply $x^k$ on both sides and then do summation from $k=1$ to $\infty$, the left side equals  $G(z,x)$.
The part $(\rom1)$:
\begin{align}\label{p1}
\sum_{k=1}^{\infty}(\rom1)\cdot x^k&=z\sum_{k=1}^{\infty}\sum_{j=3}^k\sum_{l=2}^{j-1}G_{j-l}(z)F_{k-j+1}(z)x^k\nonumber\\
&=z\sum_{k=3}^{\infty}\sum_{l=2}^{k-1}\left(\sum_{j=l+1}^{k}G_{j-l}(z)x^{j-l}\cdot F_{k-j+1}(z)x^{k-j+1}\right)\cdot x^{l-1}\nonumber\\
&=z\sum_{l=2}^{\infty}\left(\sum_{k=l+1}^{\infty}\sum_{j=l+1}^{k}G_{j-l}(z)x^{j-l}\cdot F_{k-j+1}(z)x^{k-j+1}\right)\cdot x^{l-1}\nonumber\\
&=z\sum_{l=2}^{\infty}F(z,x)G(z,x) x^{l-1}~,
\end{align}
and for the same reason, the contribution of part $(\rom2)$ equals

\begin{align}\label{p2}
z\sum_{l=2}^{\infty}G(z,x)G(z,x) x^{l-1}~.
\end{align}
Lastly for the part $(\rom3)$:
\begin{align}\label{p3}
z\sum_{k=1}^{\infty}\sum_{j=2}^kG_{j-1}(z)G_{k-j+1}(z)x^k&=z\sum_{k=2}^{\infty}\sum_{j=2}^kG_{j-1}(z)x^{j-1}\cdot G_{k-j+1}(z)x^{k-j+1}\nonumber\\
&=zG(z,x)G(z,x).
\end{align}
Finally, combining \eqref{p1}, \eqref{p2} and \eqref{p3} leads to:
\begin{align*}
G(z,x)&=\sum_{k=1}^{\infty}x^k+z\sum_{k=2}^{\infty}F(z,x)G(z,x)x^{k-1}+z\sum_{k=2}^{\infty}G(z,x)G(z,x)x^{k-1}\nonumber\\
&\quad +zG(z,x)G(z,x)~,
\end{align*}
which is the second equation in \eqref{FG}.

\subsection{Exact formula for $m_k$}
First, since $F(z,x)=\sum_{k=0}^{\infty}F_k(z)x^k$, we have
\begin{align}\label{FK}
F_k(z)=\frac{1}{k!}\cdot \left.\frac{\partial F(z,x)}{\partial x^k}\right|_{x=0}~.
\end{align}
In the following, we will use the shorthands $F=F(z,x)$, $G=G(z,x)$ and $F_k=F_k(z)$.  In \eqref{FG}, we can first express $G$ in function of $F$ using the first equation and then derive from the second equation that
\begin{align*}
F=\sum_{l=1}^{\infty}x^l\cdot (1+z^2F^2+2zF+zF^2+z^2F^3+zF^2)~.
\end{align*}
Taking $k$-th derivative  on both sides with respect to $x$ and combining with \eqref{FK}, we have:
\begin{align}\label{fffk}
F_k&=1+2z\sum_{j=1}^{k-1}F_{j}+(z^2+2z)\sum_{l=2}^{k-1}\Sum{a+b=l}{a,b\geq 1}F_aF_b+z^2\sum_{l=3}^{k-1}\Sum{a+b+c=l}{a,b,c\geq 1}F_aF_bF_c~.
\end{align}

Due to the definition of $m_k$ that $m_k
=y^{2k-1}F_{k}\left(\frac {1}{y}\right)$, substituting $1/y$ for $z$ in \eqref{fffk} and  multiplying both sides by $y^{2k-1}$, we get the recursion for $m_k$:
\begin{align}\label{mk1}
m_k&=y^{2k-1}+\frac 2y \sum_{j=1}^{k-1}m_j\cdot y^{2k-2j}+(\frac{1}{y^2}+\frac 2y)\sum_{l=2}^{k-1}\Sum{a+b=l}{a,b\geq 1}m_am_b\cdot y^{2k-2l+1}\nonumber\\
&+\frac{1}{y^2}\sum_{l=3}^{k-1}\Sum{a+b+c=l}{a,b,c\geq 1}m_am_bm_c\cdot y^{2k-2l+2}~.
\end{align}
Then we substitute $k-1$ for $k$ in \eqref{mk1} and multiply both sides by $y^2$, which is the following equation:
\begin{align}\label{mk2}
y^2\cdot m_{k-1}&=y^{2k-1}+\frac 2y \sum_{j=1}^{k-2}m_j\cdot y^{2k-2j}+(\frac{1}{y^2}+\frac 2y)\sum_{l=2}^{k-2}\Sum{a+b=l}{a,b\geq 1}m_am_b\cdot y^{2k-2l+1}\nonumber\\
&+\frac{1}{y^2}\sum_{l=3}^{k-2}\Sum{a+b+c=l}{a,b,c\geq 1}m_am_bm_c\cdot y^{2k-2l+2}~.
\end{align}

\noindent By combining \eqref{mk1} and \eqref{mk2}, we have:
\begin{align}\label{mk3}
m_k&=(2y+y^2)m_{k-1}+(y+2y^2)\cdot\Sum{a+b=k-1}{a,b\geq 1}m_am_b+y^2\cdot\Sum{a+b+c=k-1}{a,b,c\geq 1}m_am_bm_c~.
\end{align}

By the definition of $m_k$ that $m_0=1$, we have
\begin{align}\label{mk4}
&\Sum{a+b+c=k-1}{a,b,c\geq 1}m_am_bm_c=\Sum{a+b+c=k-1}{a,b,c\geq 0}m_am_bm_c-3\Sum{a+b=k-1}{a,b\geq 1}m_am_b-3m_{k-1}~,\nonumber\\
&\Sum{a+b=k-1}{a,b\geq 1}m_am_b=\Sum{a+b=k-1}{a,b\geq 0}m_am_b-2m_{k-1}~.
\end{align}
Bringing these two equations in \eqref{mk4} into \eqref{mk3}, we get:
\begin{align}\label{rm}
y^2 \Sum{a+b+c=k-1}{a,b,c\geq 0}m_am_bm_c+(y-y^2)\Sum{a+b=k-1}{a,b\geq 0}m_am_b=m_k~.
\end{align}
Let $h(x)$ be the moment generating function: $h(x)=\sum_{k=0}^{\infty}m_k x^k$,
the we multiply $x^k$ on both sides of \eqref{rm} and do summation from $k=1$ to $\infty$ and combine with the fact that
\begin{align}\label{hx}
h(x)=1+\sum_{k=1}^{\infty}m_k x^k~,
\end{align} we have the following equality:
\begin{align}\label{rh}
xy^2h^3(x)+x(y-y^2)h^2(x)-h(x)+1=0~.
\end{align}
Based on the theory of B\"{u}rmann-Lagrange series, see Page 145 of \cite{polya}, and let $z=h(x)-1$ and $\varphi=y^2(z+1)^3+(y-y^2)(z+1)^2$, we may invert \eqref{rh} to obtain that
\begin{align*}
z=\sum_{n=1}^{\infty} \left.\frac{w^n}{n!}\left[\frac{d^{n-1}\left[y^n(z+1)^{2n}(yz+1)^n\right]}{dz^{n-1}}\right]\right|_{z=0}~,
\end{align*}
where $w=z/\varphi=x$.
Then based on the Leibniz's rule in differential calculus,
we have
\begin{align*}
\frac{d^{n-1}\left[(z+1)^{2n}(yz+1)^n\right]}{dz^{n-1}}= \sum_{i=0}^{n-1}\begin{pmatrix}
                                                                                          n-1 \\
                                                                                          i \\
                                                                                        \end{pmatrix}
\left[\frac{d^i\left[(z+1)^{2n}\right]}{dz^i}\cdot \frac{d^{n-1-i}\left[(yz+1)^n\right]}{dz^{n-1-i}}\right]~,
\end{align*}
which leads to the fact that
\begin{align*}
h(x)=1+z=1+\sum_{n=1}^{\infty}\left[\sum_{i=0}^{n-1}\frac 1n \begin{pmatrix}
                                                               2n \\
                                                               i \\
                                                             \end{pmatrix}\begin{pmatrix}
                                                                            n \\
                                                                            i+1 \\
                                                                          \end{pmatrix}y^{2n-1-i}
\right]\cdot x^n~,
\end{align*}
and this is equivalent to
\begin{align}\label{mk}
m_k=\sum_{i=0}^{k-1}\frac 1k \begin{pmatrix}
                               2k \\
                               i \\
                             \end{pmatrix}\begin{pmatrix}
                                            k \\
                                            i+1 \\
                                          \end{pmatrix}y^{2k-1-i}~.
\end{align}

\begin{remark}\label{fff}
Since
\begin{align*}
m_k=y^{2k-1}F_k\left(\frac 1y\right)=y^{2k-1}\sum_{m=0}^kf_m(k)\frac{1}{y^m}=\sum_{m=0}^{k-1}f_m(k)y^{2k-1-m}~,
\end{align*}
\eqref{mk} reduces to the fact that
\begin{align}\label{fmkfor}
f_m(k)=\frac 1k\begin{pmatrix}
                 2k \\
                 m \\
               \end{pmatrix}\begin{pmatrix}
                              k \\
                              m+1 \\
                            \end{pmatrix}~.
\end{align}
\end{remark}

\begin{remark}
The recursion \eqref{rm} has a remarkable nature. Notice that the recursion
\[
c_k=\Sum{a+b=k-1}{a,b\geq 0}c_ac_b
\]
and
\[
d_k=\Sum{a+b+c=k-1}{a,b,c\geq 0} d_ad_bd_c
\]
define the  (standard) Catalan numbers and the generalized Catalan numbers of order three, respectively (see \cite{hp}). The moment sequence $(m_k)$ of the LSD of this paper can be thought as a complex combination of these two families of Catalan numbers.
\end{remark}


\end{document}